\documentclass[11pt,reqno]{amsart}
\usepackage{graphicx} 
\usepackage[dvipsnames]{xcolor}
\usepackage{todonotes}
\usepackage[foot]{amsaddr}
\usepackage{smartdiagram}
\usepackage{tikz}
\usepackage{amsmath,bm,bbm,amsthm, amssymb}
\usepackage{fullpage}
\usepackage{color}
\usepackage{dsfont}
\usepackage{animate}
\usepackage{etoolbox}
\usepackage{comment}
\usepackage{pgf}
\usetikzlibrary{calc}
\usetikzlibrary{patterns}
\usetikzlibrary{arrows}
\usetikzlibrary{decorations.pathreplacing}
\usepackage[utf8]{inputenc}
\usepackage{pgfplots}

\usepackage[most]{tcolorbox}
\usepackage{adjustbox}
\usepackage[final]{pdfpages}
\usepackage[ruled,vlined,linesnumbered,algosection,resetcount]{algorithm2e}
\usepackage{blkarray}
\usepackage{wrapfig}
\usepackage{ marvosym }
\usepackage[hidelinks]{hyperref}
\usepackage{orcidlink}
\usepackage{nicefrac}

\newtheorem{theorem}{Theorem}[section]
\newtheorem{corollary}[theorem]{Corollary}
\newtheorem{lemma}[theorem]{Lemma}

\newtheorem{proposition}[theorem]{Proposition}
\newtheorem{remark}[theorem]{Remark}

\numberwithin{equation}{section}

\newcommand{\supp}{\mathrm{supp}}

\author{Markus Heydenreich\orcidlink{0000-0002-3749-7431}}
\address[Markus Heydenreich]{Institut für Mathematik, Universität Augsburg, 86135 Augsburg, Germany}
\email{markus.heydenreich@uni-a.de}
\author{Christian Hirsch\orcidlink{0000-0003-4136-3740}}
\address[Christian Hirsch]{Department of Mathematics, Aarhus University, Ny Munkegade 118, 8000 Aarhus C, Denmark}
\address[Christian Hirsch]{DIGIT Center, Aarhus University, Finlandsgade 22, 8200 Aarhus N, Denmark}
\email{hirsch@math.au.dk}
\author{Matthias L\"owe\orcidlink{0000-0001-9893-3770}}
\address[Matthias Löwe]{Institut für Mathematische Stochastik, Universität Münster, Orléans-Ring 10, 48149 Münster}
\title{A mathematical analysis of hierarchical Hopfield models}
\email{maloewe@uni-muenster.de}
    \subjclass[2010]{92B20}

\begin{document}
	\maketitle
    \begin{abstract}
        {The central question that we address is: How can structured 
        information be stored in a hierarchical Hopfield model involving hidden layers? To this end, we develop a formalism of \emph{strokes} and \emph{concepts} that allows us to appropriately structure information: 
        initial features are first classified into \emph{strokes}, which in a second step are aggregated into \emph{concepts}. 
        We rigorously derive criteria under which concepts can be retrieved from noisy input data. A remarkable effect is that we do not require a perfect retrieval at the level of strokes, as the second-layer retrieval procedure compensates for first-layer errors. 
        We treat separately the cases of fixed and variable-sized concepts. }
    \end{abstract}
	\section{Introduction}

	Associative memory refers to the cognitive capacity to recall interconnected sets of information, called memories or patterns, when presented with partial cues from one such set. This mechanism underlies numerous human cognitive functions.
	
	This idea was mathematically formalised by Hopfield \cite{Hopfield1982} in his groundbreaking 1982 paper. {In their original model, a set of $N$ neurons are in either of the two states $-1$ and $+1$.}  {We will instead use the equivalent formulation where states are encoded by $0$ and $+1$, which is more common in the framework of sparse patterns and the Willshaw model, see e.g.\ \cite{Amari1989,griponb,GHLV16,Willshaw}.} 
    These neurons are connected and each of the connections between neuron $i$ and neuron $j$ is endowed with a random variable $J_{ij}$ (assuming the patterns are chosen at random).  
	A state $\sigma$ of this model is a collection $\sigma=(\sigma_i)_{i=1}^N\in \{-1,+1\}$ respectively\ $\{0,1\}$ depending on the above choice of the activities. Such a state is endowed with an energy function; in the Hopfield model this energy is given by 
	$$
	H_N(\sigma):=-\frac 1 {2N} \sum_{i,j=1}^N \sigma_i \sigma_j J_{ij}.
	$$
    For patterns $(\xi^\mu)_{\mu=1}^N=((\xi_i^\mu)_{i=1}^N)_{\mu=1}^M$  
    the $J_{ij}$ are defined as $J_{ij}=\sum_{\mu=1}^M \xi_i^\mu \xi_j^\mu$. 
    Typical pattern choices are such that 
the $((\xi_i^\mu)_{i=1}^N)_{\mu=1}^M\in\{-1,+1\}^{N\times M} $ are centered and i.i.d., which reflects
    the absence of any a~priori structural information about the patterns.
    If $M$ is not too large, the patterns minimise the energy function and are therefore fixed points of a gradient descent dynamics. Moreover, a noisy input can be reconstructed under such a dynamics. 
	
	A central question in this context is: \emph{How large can $M$ be chosen to ensure that patterns are (with high probability) fixed points of such a retrieval dynamics?} For the classical Hopfield model the answer is 
	$M =\Theta (N/\log N)$ if we require perfect reconstruction \cite{L98,MPRV}, and $M=\Theta(N)$ {in the situation where it suffices to almost reconstruct a given pattern} \cite{AGS,loukianova,Newman_hopfield,Talagrand_hopfield}. For sparse patterns, this capacity can be increased to $M = \Theta( N^2/\log N)$ \cite{GHLV16}. Here we use the Landau notation, i.e., {$f \in \Theta(g)$ whenever $c < \inf_x f(x)/g(x)\le\sup_x f(x)/g(x) <C$ for uniform constants $C>c>0$.}
	
	Certain applications demand memory capacity far exceeding $\mathrm{const.} N$, and the classical Hopfield network proves inadequate for such scenarios due to its limited capacity. If, however, we change the quadratic interaction terms $\sigma_i\sigma_j$ in the energy function to a polynomial or even exponential term, then in the resulting setup one can indeed store a polynomial or even super-polynomial number of patterns, cf.\ \cite{albanese2026yet,DHLUV17,KrotovHopfield2016}, {and also  \cite{gayrard2025} for new results on stable reconstruction of these models}. 
	Krotov and Hopfield \cite{Krotov2020LargeAM} argue that such \emph{Dense Associative Memories} are biologically not plausible, because they cannot be designed in terms of true microscopic degrees of freedom, since they contain many-body interaction terms. 

	Instead, they propose a hierarchical model and show that  Dense Associative Memories are effective descriptions of a theory involving additional hidden neurons with only two-body synaptic interactions. These additional hidden layers allow for the construction of biologically plausible networks with the same capacity as Dense Associative Memories.
	
	Such hierarchical Hopfield models are the focus of this paper. 
	The key question we address is: \emph{What advantages does such a hierarchical construction provide beyond reproducing known capacity results?}
    It appears that hierarchical constructions do not, by themselves, improve the asymptotic storage capacity beyond classical Hopfield scaling. Instead, their core effect lies in enabling different mechanisms for organizing and retrieving patterns. This will be made explicit in a concrete hierarchical stroke–concept model studied below.

{    Hierarchical and hierarchically structured associative memories have also been studied earlier in the
literature, both in terms of explicit hierarchical architectures and in terms of hierarchically
correlated or ultrametrically organised pattern ensembles, see
\cite{BacciMatoParga1990,BosKuehnVanHemmen1988,CortesKroghHertz1987,Gutfreund1988,KroghHertz1988,PargaVirasoro1986}.
Our perspective here is different: we focus on a concrete compositional stroke--concept model and
derive explicit probabilistic retrieval guarantees for this hierarchical construction.}

    We now summarise our findings.	After introducing the hierarchical stroke--concept model, we first analyse the case in which each concept consists of a fixed number of strokes. In this setting, we prove sufficient conditions for successful retrieval of a prescribed concept and derive asymptotic consequences for the admissible growth of the number of stored concepts. We then consider an extension in which the concept sizes are allowed to vary. For this variable-size model, we formulate an appropriate penalised scoring rule and establish analogous retrieval results on suitably sized windows. These results give a mathematical description of how retrieval performance depends on both the feature dimension and the distribution of concept sizes.

    \medskip
	\paragraph{\bf Organisation.} In Section 2 we introduce the hierarchical model and its energy formulation. Section 3 analyses the resulting effective energies and establishes capacity benchmarks. In Section 4 we introduce a hierarchical stroke–concept construction. Section 5 contains the main results {for equal concept sizes,} and the proofs are collected in Section 6. {Finally, Section 7 discusses the case of variable-sized concepts.}

\section{Model and Energy Formalism}

We consider a class of hierarchical associative memory models consisting of a visible
(feature) layer and one or more hidden layers, and focus on the effective energy induced on the visible units. Throughout, we focus on the energy
structure induced on the visible units, {and on the consequences of such a hierarchical setup for storage
capacity.}

{Similarly to the setup considered in \cite{KrotovHopfield2016},} the network consists of a visible layer with $N_f$ units and a collection of $N_h$ hidden units arranged in one or more layers. For clarity of exposition, we first describe a single hidden layer and later comment on deeper hierarchies. The visible units are described by variables $x_i \in \mathbb{R}$, $i=1,\dots,N_f$, while the hidden units are denoted by $y_\mu \in \mathbb{R}$, $\mu=1,\dots,N_h$.  The real-valued formulation allows for a convenient energy-based description; discrete models arise as special cases.
The connectivity between visible and hidden units is specified by a weight matrix
\[
\Xi = (\xi_{\mu i}) \in \mathbb{R}^{N_h \times N_f}.
\]
No direct couplings within the visible layer or within the hidden layer are assumed at
the microscopic level.
The network dynamics is of leaky--integrator type and compatible with a global Lyapunov function. 
Putting 
\[
u_\mu(x) = \sum_{i=1}^{N_f} \xi_{\mu i} x_i
\]
for the overlaps between the visible state and the hidden patterns, 
a convenient formulation is obtained by introducing an energy
functional $E(x,y)$ of the form
\begin{equation}
	E(x,y)
	= \frac12 \sum_{i=1}^{N_f} x_i^2
	+ \sum_{\mu=1}^{N_h} U(y_\mu)
	- \sum_{\mu=1}^{N_h} u_\mu(x)\,\phi(y_\mu),
	\label{eq:general-energy}
\end{equation}
which consists of a quadratic regularisation on the visible layer, a local potential on the hidden units, and an interaction term coupling visible and hidden variables via overlaps.
 The function $U$
denotes a hidden-layer potential and $\phi = U'$ its associated activation function.
The quadratic term $\tfrac12 \sum_i x_i^2$ plays the role of a leak or regularisation on
the visible layer.

Under gradient-descent dynamics,
\[
\tau_x \frac{dx_i}{dt} = -\frac{\partial E}{\partial x_i},
\qquad
\tau_y \frac{dy_\mu}{dt} = -\frac{\partial E}{\partial y_\mu},
\]
and the energy $E(x,y)$ decreases monotonically along trajectories. Fixed points of the
dynamics therefore correspond to local minima of $E$.

A central assumption is separation of time scales such that the hidden variables relax much faster
than the visible ones. In this regime, for each fixed visible configuration $x$, the
hidden variables $y$ remain close to the minimiser of $E(x,y)$ with respect to $y$. Since
the energy \eqref{eq:general-energy} is a sum of independent single-site contributions in
the hidden variables, this minimisation decouples over $\mu$.

This motivates defining an effective energy on the visible layer by
\begin{equation}
	E_{\mathrm{eff}}(x)
	:= \inf_{y \in \mathbb{R}^{N_h}} E(x,y),
	\label{eq:effective-energy-def}
\end{equation}
which governs the slow dynamics of the visible units.
The structure of the resulting effective Hamiltonian depends
crucially on the choice of the hidden-layer potential $U$.

If $U$ is quadratic, elimination of the hidden variables produces an effective energy
that is purely quadratic in the visible variables. In this case the model reduces to a
Hopfield-type associative memory with an effective coupling matrix determined by the
weights $\Xi$ and, in deeper hierarchies, by additional hidden-layer couplings \cite{Krotov2020LargeAM}. As a
consequence, the storage capacity is bounded by the classical Hopfield scaling, growing
at most linearly with the number $N_f$ of visible units.

By contrast, genuinely nonlinear choices of $U$ lead, after elimination of the hidden
layer, to effective energies that depend nonlinearly on the overlaps $u_\mu(x)$. These
nonlinear contributions are the essential mechanism underlying high-capacity
associative memories. The precise form of the resulting effective energy and its
implications for storage capacity are discussed in the following sections.

\section{Effective Energies and Capacity Benchmarks}

In this section, we analyse the class of effective visible-layer energies that arise from
eliminating hidden layers in hierarchical associative memory models. Our goal is to identify the structural features of the effective Hamiltonian that control storage capacity. In particular, we clarify the role of quadratic versus higher-order hidden-layer potentials and establish the
Hopfield model as the natural capacity benchmark for a broad class of hierarchical constructions. This perspective mainly serves to clarify the role of nonlinearity in hierarchical models and to provide structural background for the constructions studied in the following sections.

We first consider the case in which all hidden-layer potentials are quadratic. In this
setting, the minimisation of the energy with respect to the hidden variables can be
carried out explicitly \cite{Krotov2020LargeAM} and yields an effective energy on the visible layer that is purely
quadratic in the visible activities. Independently of the number of hidden layers or the
details of their couplings, the resulting effective Hamiltonian takes the generic form
\begin{equation}
	E_{\mathrm{eff}}(x)
	= \frac12 \sum_{i=1}^{N_f} x_i^2
	- \frac12 \sum_{i,j=1}^{N_f} J^{\mathrm{eff}}_{ij}\, x_i x_j ,
	\label{eq:quadratic-effective-energy}
\end{equation}
for some symmetric coupling matrix $J^{\mathrm{eff}}$ determined by the hierarchical
weights.

When the visible units are restricted to binary states $x_i \in \{-1,1\}$, the quadratic
self-energy term is constant and the model reduces to a Hopfield-type Hamiltonian. It
follows that, as long as the effective energy remains quadratic, the storage capacity is
bounded by the classical Hopfield scaling and grows at most linearly with the number
$N_f$ of visible units. Deeper hierarchies may reshape correlations among the effective
patterns encoded in $J^{\mathrm{eff}}$, but they do not alter the asymptotic capacity
scaling.

To go beyond quadratic effective energies, nonlinearity must survive the elimination of
the hidden variables. The simplest setting in which this occurs is a model with a single
hidden layer endowed with a nonlinear potential. Here, the effective energy on
the visible layer depends on the overlaps
\[
u_\mu(x) = \sum_{i=1}^{N_f} \xi_{\mu i} x_i
\]
through a nonlinear function.
More precisely, for a hidden-layer potential $U$ with activation $\phi = U'$, elimination
of the hidden variables leads to an effective visible-layer energy of the form
\begin{equation}
	E_{\mathrm{eff}}(x)
	= \frac12 \sum_{i=1}^{N_f} x_i^2
	- \sum_{\mu=1}^{N_h} \Phi\bigl(u_\mu(x)\bigr),
	\label{eq:nonlinear-effective-energy}
\end{equation}
where the function $\Phi$ is defined by
$
\Phi(u)
= - \inf_{y \in \mathbb{R}} \bigl[ U(y) - u\,\phi(y) \bigr].
$
This class of energies coincides with the nonlinear Hopfield-type models introduced in
the context of Dense Associative Memories \cite{DHLUV17, KrotovHopfield2016}. The growth behaviour of $\Phi(u)$ for large
$|u|$ determines the stability of stored patterns and the achievable storage capacity.

Introducing additional hidden layers and subsequently freezing them modifies the effective function $\Phi$ appearing in \eqref{eq:nonlinear-effective-energy}, cf.\ \cite{krotov2021hierarchical}.  However, it does
not change the basic structure of the energy, which remains a sum of independent contributions depending only on the overlaps $u_\mu(x)$. 
From a mathematical point of view, any effective nonlinearity $\Phi$ generated by a finite hierarchy under suitable
regularity assumptions can also be realised by a single hidden layer equipped with an
appropriate potential $U$.
Consequently, increasing the depth of the hierarchy does not, by itself, enlarge the
class of effective Hamiltonians on the visible layer, nor does it change the asymptotic
storage capacity scaling as a function of $N_f$. Capacity enhancement beyond the
Hopfield bound requires genuinely nonlinear dependence on the overlaps, not merely
additional quadratic layers.

This indicates a clear separation of roles. Quadratic hidden layers, even
in deep hierarchies, lead to Hopfield-type effective energies and are therefore subject to
the classical capacity bound. Nonlinear hidden-layer potentials are necessary to achieve
superlinear storage capacity. At the same time, depth may still play an important role in
shaping the geometry of basins of attraction, the robustness of retrieval, or the
structure of correlations among patterns.

In the following section, we investigate these questions in detail and show that, while
the asymptotic storage capacity remains unchanged, hierarchical architectures can yield
substantial qualitative improvements in associative memory performance.
We now turn to a different aspect, namely how hierarchical structure can affect retrieval performance in concrete models.

%
%
\section{A Hierarchical Stroke--Concept Memory}
\label{sec:hier}

This section introduces the hierarchical construction which is the backbone of our main results.
We consider a three-layer network consisting of a feature layer and two associative layers, where the second associative layer also serves as the output layer.

Neurons of the 
input layer are connected to the neurons of the first associative layer and neurons of the first associative layer are connected to neurons on the second associative layer. Other connections do not exist.

The key idea of this construction is to represent patterns in a compositional way: simple “stroke” features are combined to form more complex “concepts”. This hierarchical organisation allows for reliable retrieval even in the presence of local errors, by aggregating many weak signals across multiple strokes.

The first associative layer stores \emph{primitive} patterns (so-called ``strokes'') in feature
space, while a second associative layer stores \emph{compositions} of these primitives
(so-called ``concepts''). Our focus is on the regime in which concepts are sparse combinations of
strokes and retrieval proceeds from a concept-level feature cue.

We illustrate the model in Figure \ref{fig:overlap_calc}. While the precise notation used in that figure will be introduced later in this section, the reader may appreciate a visual illustration of the model before the notation is introduced in full.

\subsection{Strokes and concepts}

Let $N_f$  be the number of feature units. We store $M$ stroke patterns
\[
\xi^\mu = (\xi_i^\mu)_{i=1}^{N_f} \in \{0,1\}^{N_f}, \qquad \mu=1,\dots,M,
\]
which we assume to be i.i.d.\ sparse with
\begin{equation}
	\label{eq:stroke-sparsity}
	{p:=}\mathbb P(\xi_i^\mu=1)=\frac{\log N_f}{N_f}.
\end{equation}
Thus, a typical stroke has Hamming weight $|\xi^\mu|_1 \approx \log N_f$. We remark that it is basically this sparsity assumption that motivates the choice of $0$ and $+1$ as 
the binary values a neuron in a stroke may take. On the one hand this resembles the setup 
in Willshaw or Amari models \cite{Willshaw}, and on the other hand it is a  convenient choice for determining which strokes get activated by a feature input.

 \usetikzlibrary{matrix,arrows.meta,decorations.pathreplacing}

The feature-level representation of a concept $\alpha$ is the pixelwise OR (=union) of its
constituent strokes,
\begin{equation}
	\label{eq:concept-or}
    \eta^{(\alpha)}=\big(\eta^{(\alpha)}_i\big)_{i=1,\dots,N_f}
    \quad\text{with}\quad
	\eta^{(\alpha)}_i := \bigvee_{\nu\in S_\alpha} \xi_i^\nu. 
\end{equation}
where $S_\alpha$ is a subset of strokes that are active in the concept $\alpha$.

\subsection{Layer-wise retrieval rule}
Let us next describe how retrieval in our network works. 
Given a feature-level cue $\tilde \eta\in\{0,1\}^{N_f}$, the first (stroke) layer produces
a binary activation vector $x\in\{0,1\}^M$ via thresholding overlaps:
\begin{equation}
	\label{eq:stroke-threshold-rule}
	x_\mu
	= \mathbf 1\!\left\{
	O_\mu(\tilde\eta)\ge \theta
	\right\},
	\qquad \text{where 
	$O_\mu(\tilde\eta):=\sum_{i=1}^{N_f}\tilde\eta_i\,\xi_i^\mu$
    and 
	$\theta=\kappa\log N_f$, $0<\kappa<1$.}
\end{equation}
{This step can be interpreted as a noisy detection of the underlying stroke features: some true strokes may be missed, and some spurious ones may be activated. The subsequent decoding stage will aggregate this information across strokes.} Intuitively, this step identifies which stored
strokes are present in the given cue.
\begin{figure}[btp!]
    \centering
    \includegraphics[width=0.49\linewidth]{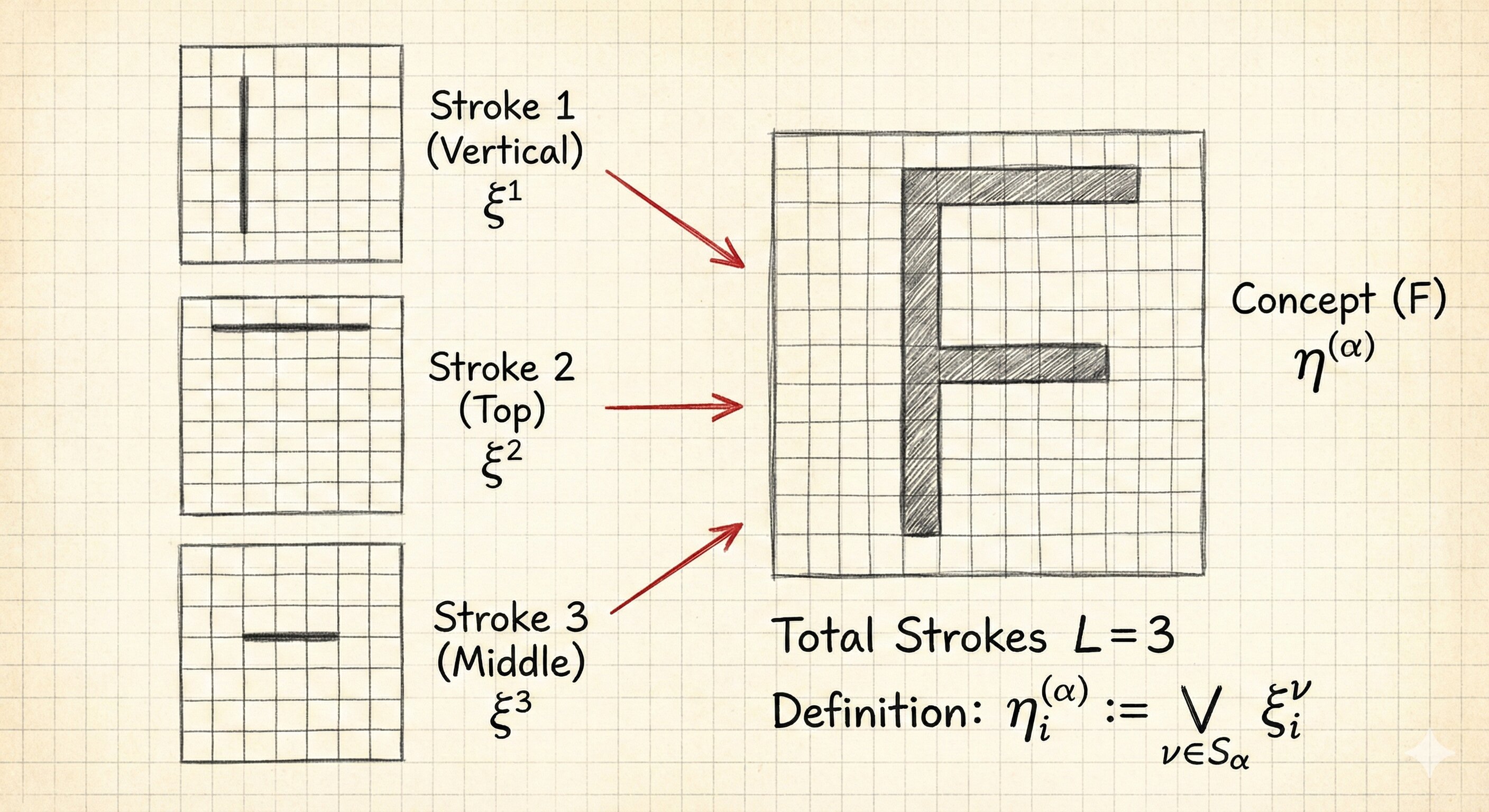}
    \includegraphics[width=0.49\linewidth]{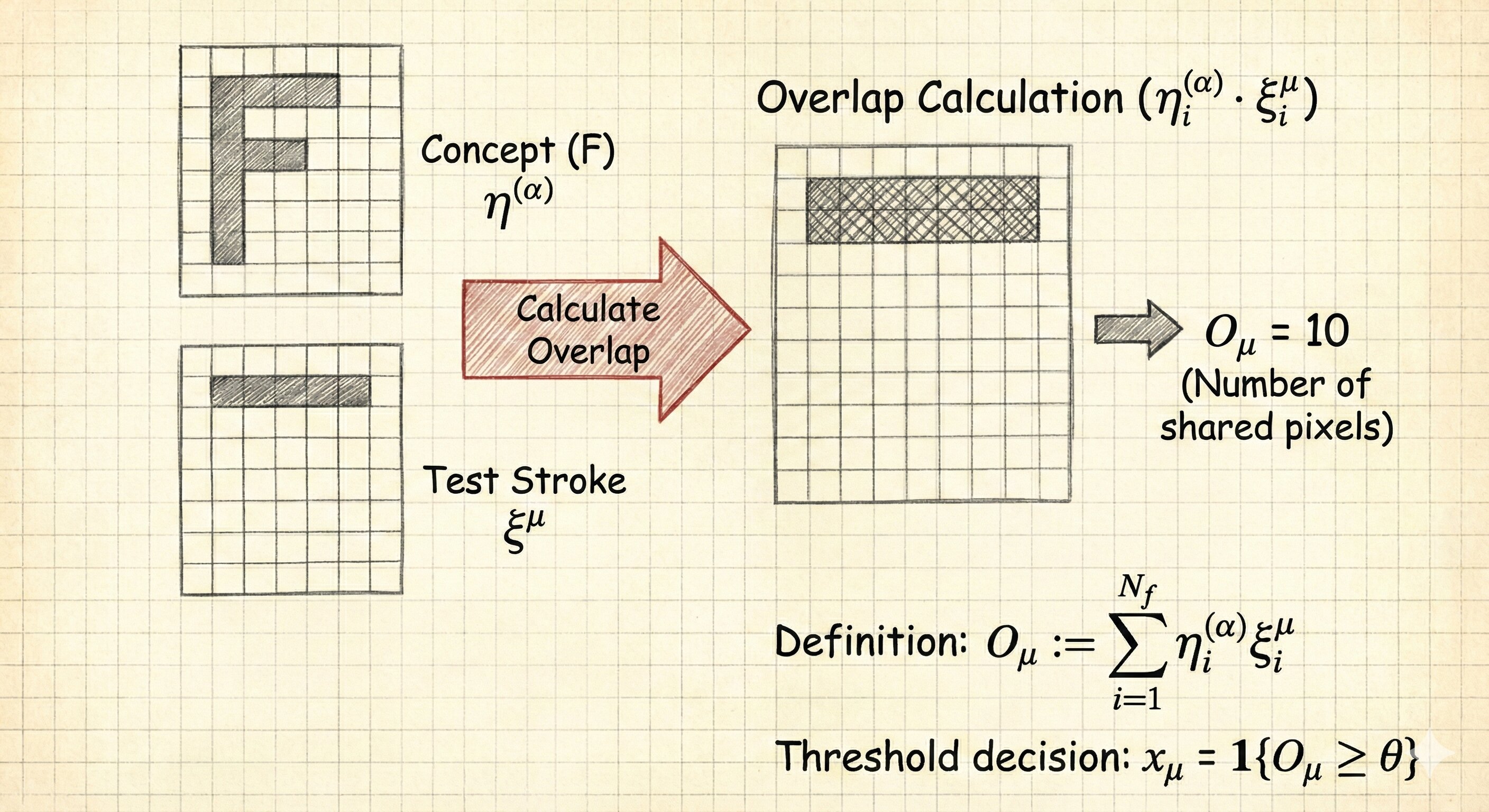}\\
\begin{tikzpicture}[
    every node/.style={font=\small},
    box/.style={draw, minimum width=6mm, minimum height=6mm, anchor=center},
    >=Stealth
]


\matrix (eta) [matrix of nodes,
    nodes={box},
    column sep=-\pgflinewidth
] {
    0 & 1 & 0 & 1 & 1 & 0 & 0 & 0 \\
};

\node[left=4mm of eta-1-1] {$\eta^\alpha_\mu =$};

\matrix (muind) [matrix of nodes,
    nodes={anchor=north,font=\scriptsize,inner sep=0pt},
    below=1mm of eta
] {
     &  &  &  &  &  &  &  \\
};

\node[right=5mm of muind] {stroke index $\mu = 1,\dots,M$};


\node[left] (lab2) at (4.7,-2.2) {$\xi^{2}_i =$};
\matrix (xi2) [matrix of nodes,
    nodes={box},
    column sep=-\pgflinewidth,
    right=1mm of lab2
] {
    0 & 1 & 1 & 0 & 1 & 0 & 0 & 1 & 0 & 0 \\
};

\node[left] (lab4) at (4.7,-3.7) {$\xi^{4}_i =$};
\matrix (xi4) [matrix of nodes,
    nodes={box},
    column sep=-\pgflinewidth,
    right=1mm of lab4
] {
    1 & 0 & 0 & 1 & 0 & 0 & 1 & 0 & 0 & 0 \\
};

\node[left] (lab5) at (4.7,-5.2) {$\xi^{5}_i =$};
\matrix (xi5) [matrix of nodes,
    nodes={box},
    column sep=-\pgflinewidth,
    right=1mm of lab5
] {
    0 & 0 & 1 & 0 & 0 & 1 & 0 & 0 & 1 & 0 \\
};

\matrix (iind) [matrix of nodes,
    nodes={anchor=north,font=\scriptsize,inner sep=0pt},
    below=1mm of xi4
] {
     &   &   &   & i &   &   &   &   &  \\
};

\node[below=9mm of iind] {feature index $i = 1,\dots,N$};


\draw[->,thick]
    (eta-1-2.south) .. controls +(0,-0.8) and +(-1,0) .. (lab2.west);

\draw[->,thick]
    (eta-1-4.south) .. controls +(0,-0.8) and +(-1,0) .. (lab4.west);

\draw[->,thick]
    (eta-1-5.south) .. controls +(0,-0.8) and +(-1,0) .. (lab5.west);

\node[align=left] at (-0.5,-3.2)
  {$\eta^\alpha_\mu = 1 \;\Rightarrow\;$ concept $\alpha$\\uses stroke $\mu$ with pattern $\xi^\mu$};

\end{tikzpicture}
    \caption{Visual representation of the feature-level representation of concept $\alpha$ (the letter F). The concept is formed by the pixelwise OR ($\bigvee$) of three sparse stroke patterns: $\xi^1$ (vertical), $\xi^2$ (top), and $\xi^3$ (middle), such that $\eta^{(\alpha)} = \bigvee_{\nu \in S_\alpha} \xi^\nu$ with $L=3$.
Illustration of the retrieval process. The overlap $O_\mu$ is calculated between the stored concept $\eta^{(\alpha)}$ and a test stroke $\xi^\mu$ by summing the shared active pixels. The system activates the stroke if $O_\mu$ exceeds the threshold $\theta$.}
    \label{fig:overlap_calc}
\end{figure}

A \emph{concept} $\alpha$ is specified by a subset of strokes
\[
S_\alpha \subset \{1,\dots,M\}\qquad \text{ with }|S_\alpha|=L,
\]
where the concept size $L$ is initially fixed (we relax this condition in Section \ref{sec:var}).  The connection between concepts, strokes and patterns is illustrated in Figure \ref{fig:overlap_calc}. 

The second (concept) layer stores $P$ concept patterns
\[
\eta^\alpha\in\{0,1\}^M \quad\text{for }  \alpha=1,\dots,P, \qquad  \text{where } \eta^\alpha_\mu=\mathbf 1\{\mu\in S_\alpha\}\quad (\mu=1,\dots,M),
\]
and receives the stroke-layer output $x$ as input. Note that 
$$
\sum_{\mu=1}^M \eta_\mu^\alpha=\|\eta^\alpha\|_1=L \qquad \text{for all }  \alpha=1,\dots,P.
$$

On the concept level, we decode a simple winner-take-all
rule based on concept scores
\begin{eqnarray}
	\label{eq:concept-score}
	H_\beta(x)&:=&\sum_{\mu=1}^M \eta^\beta_\mu\,x_\mu = |S_\beta\cap \mathrm{supp}(x)|,
	\qquad \text{and}\\
	\label{eq:argmax}
	\widehat\alpha(x)&=&\arg\max_{\beta\le P} H_\beta(x).
\end{eqnarray}
If there are several concepts maximizing $H_\beta(x)$ we take an arbitrary tie-breaking rule to choose one of them. 

The decoding therefore selects the concept whose associated stroke set has the largest overlap with the observed activation pattern, effectively aggregating evidence across multiple strokes.
In the ideal case $x=\eta^\alpha$, we have $H_\alpha(x)=L$ and $H_\beta(x)=|S_\beta\cap
S_\alpha|$.

\subsection{Heuristic picture and error decomposition}

There are two types of errors that can occur in the stroke layer: 
\begin{itemize}
	\item \emph{false negatives:} strokes in $S_\alpha$ that are inactive, and
	\item \emph{false positives:} strokes outside $S_\alpha$ that are spuriously activated.
\end{itemize}
For a fixed concept $\alpha$, define the aggregate error counts
\[
F_-:=F_-(\alpha):=\sum_{\mu\in S_\alpha}{(1-x_\mu)}
\quad \text{ as well as } \quad
F_+:=F_+(\alpha):=\sum_{\mu\notin S_\alpha} x_\mu.
\]
Successful concept retrieval can be guaranteed under an event that controls these
aggregate errors rather than enforcing exact recovery of all strokes. 

Concretely, for
parameters $\delta\in(0,1)$ and $\rho\in\mathbb N_0$, we define the ``good event'' 
\begin{equation}
	\label{eq:good-stroke-event-def}
	\mathcal G_{\alpha}(\delta,\rho)
	:=
	\Bigl\{\sum_{\mu\in S_\alpha} x_\mu \ge (1-\delta)L \Bigr\}
	\cap
	\Bigl\{\sum_{\mu\notin S_\alpha} x_\mu \le \rho\Bigr\}.
\end{equation}
The first condition bounds missed true strokes, the second bounds the number of spurious
strokes. As we show below, for sparse strokes the false-positive probability is
super-polynomially small, while false negatives provide the dominant restriction in exact-recovery regimes. 
A key idea in our approach is that allowing robust recovery through $\mathcal G_\alpha(\delta,\rho)$
replaces union bounds over $M$ events by concentration of aggregate counts and can
significantly relax admissible scalings. 

Note that, of course, instead of the notation $(1-\delta)L$, one could also fix an integer in $\{1, \dots, L\}$. However, in Section \ref{sec:var}, we will consider variable concept sizes, and in this case, the notation $(1-\delta)L$ is more convenient.

%
%
\section{Results for concepts with fixed size}
\label{sec:fixed}

In this section we analyse hierarchical retrieval for concepts of a fixed size
$L$. Our main point is that successful concept retrieval does not require exact
recovery of all constituent strokes at the intermediate layer. Instead, it is
enough that the stroke layer produces sufficiently many true positives and only
few false positives in aggregate, provided competing concepts do not overlap too
strongly with the target.

We begin by fixing notation and formulating a deterministic retrieval criterion.
This criterion separates the analysis into two parts: control of the concept
geometry (through overlaps between concept sets) and control of the stroke-layer
errors (through aggregate false negatives and false positives). The resulting
probabilistic bounds are then combined in Theorem~\ref{thm:retrieve-concepts},
which is the main result of this section.

\subsection{Setup and notation}

Fix $L\in\mathbb N$. For each concept $\alpha\in\{1,\dots,P\}$, let
\[
S_\alpha \subseteq \{1,\dots,M\}, \qquad |S_\alpha|=L,
\]
denote its associated set of strokes, and let $\eta^{(\alpha)}$ be the clean
concept cue defined in \eqref{eq:concept-or}. Recall that we assume the $M$ stroke patterns
\[
\xi^\mu = (\xi_i^\mu)_{i=1}^{N_f} \in \{0,1\}^{N_f}, \qquad \mu=1,\dots,M,
\]
to be i.i.d.\ sparse with success probability 
$$p=\frac{\log N_f}{N_f}.$$
 From this cue we generate the
stroke-layer output
\[
x=x(\eta^{(\alpha)})\in\{0,1\}^M
\]
via the threshold rule \eqref{eq:stroke-threshold-rule} with
$\theta := \kappa \log N_f$ for some constant $0<\kappa<1$.

 The concept score is
given by
\[
H_\beta(x)=\sum_{\mu=1}^M \eta^\beta_\mu x_\mu
= |S_\beta\cap \supp(x)|.
\]
To measure stroke-layer errors relative to the target concept $\alpha$, we define
\[
F_-^{(\alpha)}:=\sum_{\mu\in S_\alpha}(1-x_\mu)
\qquad \text{and } \quad
F_+^{(\alpha)}:=\sum_{\mu\notin S_\alpha}x_\mu,
\]
that is, the numbers of false negatives and false positives, respectively.
For parameters $\delta\in(0,1)$ and $\rho\in\mathbb N_0$, we consider the good event
\[
\mathcal G_\alpha(\delta,\rho)
:=
\{F_-^{(\alpha)}\le \delta L\}\cap \{F_+^{(\alpha)}\le \rho\}.
\]
Finally, we write
\[
t_*^{(\alpha)}:=\max_{\beta\neq\alpha}|S_\beta\cap S_\alpha|
\]
for the maximal overlap between the target concept and its competitors.

The key relaxation in our analysis is that we do not insist on exact recovery of
all strokes. Instead, we work with the aggregate event
$\mathcal G_\alpha(\delta,\rho)$, which only requires that most target strokes
are activated and that the number of spurious activations remains small.
The next proposition shows that this is already sufficient for correct
winner-take-all decoding, as long as competing concepts do not overlap too
strongly with the target.

\subsection{Robust concept retrieval}

\begin{proposition}[Deterministic score separation]
	\label{lem:score-separation}
	Fix a target concept $\alpha$. On the event $\mathcal G_\alpha(\delta,\rho)$, one has
	for every competitor $\beta\neq\alpha$,
	\begin{equation}\label{eq:score_sep}
	H_\alpha(x)-H_\beta(x)
	\ge ((1-\delta)L-t_*^{(\alpha)})-\rho.
	\end{equation}
	In particular, if
	\[
	t_*^{(\alpha)}+\rho < (1-\delta)L,
	\]
	then the winner-take-all decoder selects the correct concept:
	\[
	\widehat\alpha(x)=\alpha.
	\]
\end{proposition}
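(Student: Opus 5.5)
The proof is a direct counting argument on the event $\mathcal G_\alpha(\delta,\rho)$. We bound $H_\alpha(x)$ from below and $H_\beta(x)$ from above separately, then subtract.

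For the lower bound, note that $H_\alpha(x)=\sum_{\mu\in S_\alpha}x_\mu = L-F_-^{(\alpha)}$. On $\mathcal G_\alpha(\delta,\rho)$ this gives $H_\alpha(x)\ge (1-\delta)L$.

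For the upper bound, fix $\beta\neq\alpha$ and split $S_\beta$ into the disjoint parts $S_\beta\cap S_\alpha$ and $S_\beta\setminus S_\alpha$. This yields
\[
H_\beta(x)=\sum_{\mu\in S_\beta\cap S_\alpha}x_\mu+\sum_{\mu\in S_\beta\setminus S_\alpha}x_\mu .
\]
Since $x_\mu\le 1$, the first sum is at most $|S_\beta\cap S_\alpha|\le t_*^{(\alpha)}$. Since $x_\mu\ge 0$, the second sum is at most $\sum_{\mu\notin S_\alpha}x_\mu=F_+^{(\alpha)}\le\rho$. Hence $H_\beta(x)\le t_*^{(\alpha)}+\rho$, and subtracting gives \eqref{eq:score_sep}.

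For the ``in particular'' statement, the hypothesis $t_*^{(\alpha)}+\rho<(1-\delta)L$ makes the right-hand side of \eqref{eq:score_sep} strictly positive. So $H_\alpha(x)>H_\beta(x)$ for every $\beta\neq\alpha$, and $\alpha$ is the unique maximiser in \eqref{eq:argmax}. No tie occurs, so the tie-breaking rule is irrelevant and $\widehat\alpha(x)=\alpha$.

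There is no real obstacle. The only points needing care are that the false-positive bound must use the whole complement of $S_\alpha$, so that it is uniform in $\beta$, and that strict inequality is needed to exclude ties.
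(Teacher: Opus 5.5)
Your proposal is correct and follows the paper's proof essentially line for line. The paper also writes $H_\alpha(x)=L-F_-^{(\alpha)}\ge(1-\delta)L$, splits $S_\beta$ into $S_\beta\cap S_\alpha$ and $S_\beta\setminus S_\alpha$ to get $H_\beta(x)\le t_*^{(\alpha)}+\rho$, and then uses strict positivity of the margin to conclude $\widehat\alpha(x)=\alpha$.
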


Proposition~\ref{lem:score-separation} reduces successful retrieval to two
separate requirements: first, the target concept must be sufficiently well
represented at the stroke layer, as encoded by the good event
$\mathcal G_\alpha(\delta,\rho)$; second, no competing concept may overlap too
strongly with the target. In the i.i.d.\ concept model, the second requirement
can be controlled by a simple combinatorial overlap bound, while the first is
handled by probabilistic estimates for false negatives and false positives.
Combining these ingredients yields the following theorem, which is the main result of this section. 
It shows that concept retrieval remains reliable even when the intermediate stroke representation is imperfect, as long as the aggregate error level is controlled and the concept family has sufficient separation.

\begin{theorem}[Retrieving concepts]
\label{thm:retrieve-concepts}
In the above setting, assume that there exists an integer $t\in\{0,\dots,L-1\}$ such that the \emph{margin condition}
	\begin{equation}
		\label{eq:thm-margin}
		t+\rho < (1-\delta)L
	\end{equation}
	holds. 
	Then, for every fixed $\alpha$,
	\begin{equation}
		\label{eq:thm-success-prob-lower}
		\mathbb P\bigl(\widehat\alpha(x)=\alpha\bigr)
		\ \ge\
		1
		-\mathbb P\bigl(t_*^{(\alpha)}\ge t+1\bigr)
		-\mathbb P\bigl(\mathcal G_\alpha(\delta,\rho)^c\bigr).
	\end{equation}
	Moreover, in the i.i.d.\ concept model one has the explicit bound
	\begin{equation}
		\label{eq:thm-overlap-bound}
		\mathbb P\bigl(t_*^{(\alpha)}\ge t+1\bigr)
		\ \le\
		P\,\binom{L}{t+1}\Bigl(\frac{L}{M}\Bigr)^{t+1}.
	\end{equation}
	In particular, if $P=P(N_f)$ and $M=M(N_f)$ are such that
	\begin{equation}
		\label{eq:thm-capacity-conditions}
		P\,\binom{L}{t+1}\Bigl(\frac{L}{M}\Bigr)^{t+1} \xrightarrow[N_f\to\infty]{} 0
		\quad
		\text{and}
		\quad
		\max_{\alpha\le P}\mathbb P\bigl(\mathcal G_\alpha(\delta,\rho)^c\bigr)
		\xrightarrow[N_f\to\infty]{} 0,
	\end{equation}
	then the hierarchical retrieval \emph{succeeds with vanishing error} in the sense that
	\[
	\max_{\alpha\le P}\mathbb P\bigl(\widehat\alpha(x)\neq \alpha\bigr)
	\xrightarrow[N_f\to\infty]{}0 .
	\]
\end{theorem}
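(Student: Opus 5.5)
The plan has three parts: a deterministic inclusion that gives \eqref{eq:thm-success-prob-lower}, a union bound over competitors that gives \eqref{eq:thm-overlap-bound}, and a short limit argument for the final claim.

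\textbf{Step 1: reduction via Proposition~\ref{lem:score-separation}.} I claim
\[
\{t_*^{(\alpha)}\le t\}\cap\mathcal G_\alpha(\delta,\rho)\subseteq\{\widehat\alpha(x)=\alpha\}.
\]
On the left-hand event, \eqref{eq:score_sep} together with the margin condition \eqref{eq:thm-margin} gives, for every $\beta\neq\alpha$,
\[
H_\alpha(x)-H_\beta(x)\ge (1-\delta)L-t-\rho>0.
\]
So $\alpha$ is the strict maximiser, and the tie-breaking rule is irrelevant. Passing to complements and using a union bound yields \eqref{eq:thm-success-prob-lower}. Note that $\{t_*^{(\alpha)}\le t\}^c=\{t_*^{(\alpha)}\ge t+1\}$ because overlaps are integers.

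\textbf{Step 2: the overlap bound \eqref{eq:thm-overlap-bound}.} I interpret the i.i.d.\ concept model as follows: the sets $S_\beta$ are independent and uniformly distributed among the $L$-subsets of $\{1,\dots,M\}$. Condition on $S_\alpha$. By a union bound over the at most $P$ competitors $\beta\neq\alpha$, it suffices to show
\[
\mathbb P\bigl(|S_\beta\cap S_\alpha|\ge t+1 \,\big|\, S_\alpha\bigr)\le\binom{L}{t+1}\Bigl(\frac LM\Bigr)^{t+1}.
\]
The event $|S_\beta\cap S_\alpha|\ge t+1$ requires some $(t+1)$-subset $T\subseteq S_\alpha$ with $T\subseteq S_\beta$. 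There are $\binom{L}{t+1}$ choices of $T$. For a fixed $T$,
\[
\mathbb P(T\subseteq S_\beta)=\frac{\binom{M-t-1}{L-t-1}}{\binom ML}=\prod_{j=0}^{t}\frac{L-j}{M-j}\le\Bigl(\frac LM\Bigr)^{t+1},
\]
where the last inequality uses $(L-j)/(M-j)\le L/M$ for $L\le M$. If instead the model draws strokes with replacement or as independent Bernoulli$(L/M)$ indicators, the same bound on $\mathbb P(T\subseteq S_\beta)$ holds by an analogous computation. Averaging over $S_\alpha$ finishes this step.

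\textbf{Step 3: the asymptotic claim.} The bound from Step 2 is uniform in $\alpha$. Taking the maximum over $\alpha\le P$ in \eqref{eq:thm-success-prob-lower} gives
\[
\max_{\alpha\le P}\mathbb P\bigl(\widehat\alpha(x)\neq\alpha\bigr)\le P\binom{L}{t+1}\Bigl(\frac LM\Bigr)^{t+1}+\max_{\alpha\le P}\mathbb P\bigl(\mathcal G_\alpha(\delta,\rho)^c\bigr).
\]
Both terms vanish by \eqref{eq:thm-capacity-conditions}.

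The main obstacle is modest and sits in Step 2. The work is in fixing exactly what ``i.i.d.\ concept model'' means, in particular whether $S_\alpha$ itself is random and independent of the other $S_\beta$, and in verifying the inclusion probability bound $\mathbb P(T\subseteq S_\beta)\le (L/M)^{t+1}$ under that model. The rest is a direct application of Proposition~\ref{lem:score-separation}.
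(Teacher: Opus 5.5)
Your proposal is correct and follows the paper's proof essentially verbatim. The paper also derives $\{t_*^{(\alpha)}\le t\}\cap\mathcal G_\alpha(\delta,\rho)\subseteq\{\widehat\alpha(x)=\alpha\}$ from Proposition~\ref{lem:score-separation} and the margin condition, and obtains \eqref{eq:thm-overlap-bound} from Lemma~\ref{lem:overlap}, whose proof is exactly your double union bound over $(t+1)$-subsets $T\subseteq S_\alpha$ and competitors $\beta\neq\alpha$ via $\mathbb P(T\subseteq S_\beta)=\prod_{k=0}^{t}\frac{L-k}{M-k}\le (L/M)^{t+1}$; your conditioning on $S_\alpha$ and the aside on alternative sampling schemes are harmless extras.
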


The theorem makes precise that exact stroke recovery is not needed for successful
concept decoding. What matters is only that the target concept retains enough of
its true strokes, that few spurious strokes are activated, and that competitors
do not overlap too strongly with the target.
As a simple consequence, one obtains the following polynomial growth regimes for
the numbers of strokes and concepts.

%
%
\begin{corollary}
	\label{cor:poly-gamma}
	In the above setting,
	choose an
	integer $t\in\{0,\dots,L-1\}$ such that the margin condition \eqref{eq:thm-margin}
	holds. Let $P=P(N_f)=N_f^r$ for some $r>0$, and suppose $M=M(N_f)=N_f^\gamma$ for
	some $\gamma>0$.
	
	\begin{itemize}
		\item[\textup{(i)}] \emph{(Fixed target concept)} If $\gamma>r/(t+1)$ and
		$\max_{\alpha\le P}\mathbb P(\mathcal G_\alpha(\delta,\rho)^c)\to 0$, then
		\[
		\mathbb P(\widehat\alpha(x)\neq\alpha)\xrightarrow[N_f\to\infty]{}0
		\qquad\text{for every fixed }\alpha.
		\]
		
		\item[\textup{(ii)}] \emph{(Uniform over all concepts)} If $\gamma>2r/(t+1)$ and
		$\max_{\alpha\le P}\mathbb P(\mathcal G_\alpha(\delta,\rho)^c)\to 0$, then
		\[
	\mathbb P(\exists \alpha \le P: \widehat\alpha(x)\neq\alpha)\xrightarrow[N_f\to\infty]{}0.
		\]
	\end{itemize}
\end{corollary}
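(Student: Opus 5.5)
The plan is to derive both parts of Corollary~\ref{cor:poly-gamma} from Theorem~\ref{thm:retrieve-concepts}. For part (i), I would insert $P=N_f^r$ and $M=N_f^\gamma$ into the explicit overlap bound \eqref{eq:thm-overlap-bound}. Since $L$ and $t$ are fixed, $\binom{L}{t+1}L^{t+1}$ is a constant $C_{L,t}$, and therefore
\[
P\binom{L}{t+1}\Bigl(\frac{L}{M}\Bigr)^{t+1} = C_{L,t}\,N_f^{\,r-\gamma(t+1)} ,
\]
which tends to $0$ exactly when $\gamma>r/(t+1)$. The margin condition \eqref{eq:thm-margin} holds by assumption, and so does $\max_{\alpha}\mathbb P(\mathcal G_\alpha(\delta,\rho)^c)\to0$. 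Both hypotheses in \eqref{eq:thm-capacity-conditions} are then satisfied, and Theorem~\ref{thm:retrieve-concepts} gives $\max_{\alpha\le P}\mathbb P(\widehat\alpha(x)\ne\alpha)\to0$. In particular this holds for every fixed $\alpha$.

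For part (ii), I would apply a union bound over $\alpha\le P$ to the lower bound \eqref{eq:thm-success-prob-lower}:
\[
\mathbb P(\exists\alpha:\widehat\alpha\neq\alpha)\le \sum_{\alpha\le P}\mathbb P(t_*^{(\alpha)}\ge t+1)+\mathbb P\Bigl(\bigcup_{\alpha\le P}\mathcal G_\alpha(\delta,\rho)^c\Bigr).
\]
Summing \eqref{eq:thm-overlap-bound} over all concepts gives the first term as at most $C_{L,t}P^2M^{-(t+1)}=C_{L,t}N_f^{2r-\gamma(t+1)}$, and this vanishes when $\gamma>2r/(t+1)$. A cleaner way to see the same bound is to note that $\{\exists\alpha: t_*^{(\alpha)}\ge t+1\}$ is the event that some pair $\alpha\ne\beta$ shares at least $t+1$ strokes. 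Bounding this over the at most $P^2$ pairs, with the pair estimate $\binom{L}{t+1}(L/M)^{t+1}$ that underlies \eqref{eq:thm-overlap-bound}, gives the same quantity. This is exactly where the exponent doubles from $r$ to $2r$.

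The main obstacle is the second term. A naive union bound gives $P\max_\alpha\mathbb P(\mathcal G_\alpha^c)$, and this is not controlled by the hypothesis $\max_\alpha\mathbb P(\mathcal G_\alpha^c)\to0$ alone. I would try to settle this in one of two ways. The first is to read ``each retrieval event'' as separate experiments, one cue per concept. In that case the meaningful uniform statement requires the stroke-layer error to be summable, that is, $P\max_\alpha\mathbb P(\mathcal G_\alpha^c)\to0$. I would check whether the stroke-layer estimates (super-polynomially small false positives, Chernoff bounds for false negatives with $\kappa<1$) in fact give $\mathbb P(\mathcal G_\alpha^c)\le N_f^{-c}$ with $c$ large enough, once $\rho$ and $\delta$ are chosen suitably. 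The second, if the stroke errors are only assumed to vanish, is to state (ii) as requiring the good events uniformly, i.e.\ the hypothesis $\mathbb P(\bigcup_\alpha\mathcal G_\alpha^c)\to0$. I would first attempt the first route. The deterministic reduction itself is straightforward: on $\{t_*^{(\alpha)}\le t\}\cap\mathcal G_\alpha$, Proposition~\ref{lem:score-separation} forces $\widehat\alpha=\alpha$.
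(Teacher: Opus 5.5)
Your part (i) and your treatment of the overlap term in part (ii) coincide with the paper's proof. The paper also substitutes $P=N_f^r$, $M=N_f^\gamma$ into \eqref{eq:thm-overlap-bound} and, for (ii), picks up one extra factor $P$ to reach $N_f^{2r-\gamma(t+1)}$. The obstacle you flag is genuine, and the paper does not resolve it. Its proof of (ii) discusses only the overlap term and tacitly keeps the good-event contribution at $\max_{\alpha\le P}\mathbb P(\mathcal G_\alpha(\delta,\rho)^c)$. The union over $\alpha$ actually produces $\mathbb P\bigl(\bigcup_{\alpha\le P}\mathcal G_\alpha(\delta,\rho)^c\bigr)$, which the stated hypothesis does not control.

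\textbf{Why your first route cannot succeed.} Statement (ii) is false as written, so only your second route can work. Here is a counterexample.
\begin{itemize}
\item[(a)] Take $t=\rho=0$ and any $\delta\in(0,1)$, so the margin condition holds. Take $\gamma>2r$ and $r>L\,I(\kappa)$, where $I(\kappa)=1-\kappa+\kappa\log\kappa\in(0,1)$ is the lower-tail rate of $|\xi^\mu|_1$ at $\theta$; for instance $r\ge L$ works.
\item[(b)] By Proposition~\ref{prop:G-fixed-sparse}, $\max_\alpha\mathbb P(\mathcal G_\alpha(\delta,\rho)^c)\to0$, so every hypothesis of (ii) holds.
\item[(c)] However, $q_-^{\mathrm{true}}=N_f^{-I(\kappa)+o(1)}$. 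Since $P^2L^2/M\to0$, the concepts are pairwise disjoint with high probability. On that event, the number of concepts all of whose $L$ strokes have weight below $\theta$ is $\mathrm{Bin}\bigl(P,(q_-^{\mathrm{true}})^L\bigr)$, with mean $N_f^{r-LI(\kappa)+o(1)}\to\infty$.
\item[(d)] Each such cue yields $x=0$, because false positives are super-polynomially rare. All scores then tie at $0$, and since $\widehat\alpha(0)$ is a single index, at most one of these cues is decoded correctly. Uniform retrieval therefore fails with probability tending to one.
\end{itemize}

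This matches what the paper's own estimates give. Proposition~\ref{lem:G-fixed} with $q_-^{\mathrm{true}}\le N_f^{-a(\kappa)}$ yields $\mathbb P(\mathcal G_\alpha(\delta,\rho)^c)\lesssim N_f^{-a(\kappa)(\lfloor\delta L\rfloor+1)}$; the cue-size term can be made $N_f^{-c}$ for any $c$ by enlarging $C$ in Lemma~\ref{lem:cue-size-upper}. The margin forces $\lfloor\delta L\rfloor+1\le L-t-\rho$, so this exponent cannot be made arbitrarily large. Your first route thus works only under an extra assumption such as $r<a(\kappa)(\lfloor\delta L\rfloor+1)$, which the corollary does not impose.

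The right repair is your second option. Assume $\mathbb P\bigl(\bigcup_{\alpha\le P}\mathcal G_\alpha(\delta,\rho)^c\bigr)\to0$, or the sufficient condition $P\max_{\alpha\le P}\mathbb P(\mathcal G_\alpha(\delta,\rho)^c)\to0$. With that hypothesis in place, your union-bound argument for (ii) is complete.
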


Moreover, to actually apply Theorem~\ref{thm:retrieve-concepts}, one needs to control the probability of the good event $\mathcal G_\alpha(\delta,\rho)$, which encodes the stroke-layer errors.

%
%
{\begin{proposition}[Control of the good event]
		\label{lem:G-fixed}
		Fix $\delta\in(0,1)$ and $\rho\in\mathbb N_0$, and let $m:=\lfloor \delta L\rfloor+1$.
		Define
		\[
		q_-^{\mathrm{true}}
		:=
		\mathbb P\bigl(|\xi^\mu|_1<\theta\bigr),
		\qquad \mu\in S_\alpha,
		\]
		and suppose that there exists a deterministic quantity $\overline q_+\ge 0$ such that
		\[
		q_+:=\max_{\mu\notin S_\alpha}\mathbb P\bigl(x_\mu=1\mid \eta^{(\alpha)}\bigr)
		\le \overline q_+
		\qquad\text{almost surely.}
		\]
		Then
		\[
		\mathbb P\bigl(\mathcal G_\alpha(\delta,\rho)^c\bigr)
		\le
		\binom{L}{m}\bigl(q_-^{\mathrm{true}}\bigr)^m
		+
		\frac{\bigl((M-L)\overline q_+\bigr)^{\rho+1}}{(\rho+1)!}.
		\]
		In particular, if
		\[
		\binom{L}{m}\bigl(q_-^{\mathrm{true}}\bigr)^m \xrightarrow[N_f\to\infty]{}0
		\qquad\text{and}\qquad
		(M-L)\overline q_+ \xrightarrow[N_f\to\infty]{}0,
		\]
		then
		\[
		\mathbb P\bigl(\mathcal G_\alpha(\delta,\rho)^c\bigr)\xrightarrow[N_f\to\infty]{}0.
		\]
\end{proposition}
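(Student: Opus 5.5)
We split the complement of the good event into its two constituents and bound each separately by a union bound:
\[
\mathcal G_\alpha(\delta,\rho)^c=\{F_-^{(\alpha)}>\delta L\}\cup\{F_+^{(\alpha)}\ge \rho+1\}.
\]
Since $F_-^{(\alpha)}$ is integer valued, $\{F_-^{(\alpha)}>\delta L\}=\{F_-^{(\alpha)}\ge m\}$ with $m=\lfloor\delta L\rfloor+1$.

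For the false negatives we use that, with the clean cue $\tilde\eta=\eta^{(\alpha)}$, every $\mu\in S_\alpha$ satisfies $\eta^{(\alpha)}_i\ge\xi^\mu_i$. Hence $O_\mu(\eta^{(\alpha)})=|\xi^\mu|_1$, and $x_\mu=0$ holds exactly when $|\xi^\mu|_1<\theta$. These events, indexed by $\mu\in S_\alpha$, depend on disjoint independent stroke vectors, so they are independent with common probability $q_-^{\mathrm{true}}$. The event $\{F_-^{(\alpha)}\ge m\}$ is contained in the union, over subsets $A\subseteq S_\alpha$ with $|A|=m$, of the events that all $\mu\in A$ are inactive. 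A union bound over these $\binom Lm$ subsets together with independence gives $\binom{L}{m}(q_-^{\mathrm{true}})^m$.

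For the false positives we condition on $\eta^{(\alpha)}$, which is a function of $(\xi^\nu)_{\nu\in S_\alpha}$ only. Given this, the variables $x_\mu$ for $\mu\notin S_\alpha$ are functions of the independent vectors $\xi^\mu$, so they are conditionally independent Bernoulli variables with parameters at most $\overline q_+$. We apply the same subset union bound: $\{F_+^{(\alpha)}\ge\rho+1\}$ is contained in the union over $(\rho+1)$-subsets $B$ of the complement of $S_\alpha$ of $\{x_\mu=1\ \forall\mu\in B\}$. This gives the conditional bound
\[
\binom{M-L}{\rho+1}\overline q_+^{\,\rho+1}\le\frac{((M-L)\overline q_+)^{\rho+1}}{(\rho+1)!},
\]
using $\binom nk\le n^k/k!$. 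Since $\overline q_+$ is deterministic, taking expectations removes the conditioning. Adding the two bounds yields the stated inequality.

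For the \emph{in particular} part, the first term tends to zero by assumption. The second term is $y^{\rho+1}/(\rho+1)!$ with $y=(M-L)\overline q_+\to0$ and $\rho$ fixed, so it also vanishes. The only step needing care is the conditional independence in the false-positive part, which requires that the almost sure bound on $q_+$ holds uniformly in $\mu$ given $\eta^{(\alpha)}$. This is exactly how the hypothesis is stated, so no real obstacle remains beyond stating the conditioning cleanly.
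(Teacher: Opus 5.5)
Your proof is correct and follows the paper's argument. You use the same union-bound split of $\mathcal G_\alpha(\delta,\rho)^c$, the same identity $O_\mu(\eta^{(\alpha)})=|\xi^\mu|_1$ (which gives $F_-^{(\alpha)}\sim\mathrm{Bin}(L,q_-^{\mathrm{true}})$), and the same conditioning on $\eta^{(\alpha)}$ for the false positives. Your union bound over $(\rho+1)$-subsets is the paper's step (stochastic domination by $\mathrm{Bin}(M-L,\overline q_+)$ plus the factorial-moment Markov bound) written out directly, and it has the small advantage of skipping the domination step for non-identically distributed Bernoulli variables.
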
}

In our model this results in: 

\begin{proposition}
	\label{prop:G-fixed-sparse}
	Fix $\delta\in(0,1)$ and $\rho\in\mathbb N_0$, and let
	$m:=\lfloor \delta L\rfloor+1$.
	Under our assumptions let $\alpha$ be fixed.
	Then there exist constants $a=a(\kappa)>0$, $C=C(L)>0$, and $c=c(\kappa,L,C)>0$
	such that
	\[
	\mathbb P\bigl(\mathcal G_\alpha(\delta,\rho)^c\bigr)
	\le
	L\,N_f^{-a}
	+
	\mathbb P\bigl(\|\eta^{(\alpha)}\|_1>C\log N_f\bigr)
	+
	\frac{\bigl((M-L)e^{-c(\log N_f)^2}\bigr)^{\rho+1}}{(\rho+1)!}.
	\]
	In particular, if $M=M(N_f)$ grows at most polynomially in $N_f$, then
	\[
	\mathbb P\bigl(\mathcal G_\alpha(\delta,\rho)^c\bigr)\xrightarrow[N_f\to\infty]{}0.
	\]
\end{proposition}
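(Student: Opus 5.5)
Our plan is to apply Proposition~\ref{lem:G-fixed}, estimating the false-negative and false-positive ingredients separately in the sparse model, with one change: $\overline q_+$ is only valid on a high-probability event, so we work on that event.

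\emph{False negatives.} For $\mu\in S_\alpha$ we have $\xi^\mu_i=1\Rightarrow\eta^{(\alpha)}_i=1$, hence $O_\mu(\eta^{(\alpha)})=|\xi^\mu|_1$. A true stroke is therefore missed exactly when $|\xi^\mu|_1<\kappa\log N_f$. Here $|\xi^\mu|_1\sim\mathrm{Bin}(N_f,p)$ has mean $\log N_f$, and the Chernoff lower-tail bound gives
\[
q_-^{\mathrm{true}}\le \exp\bigl(-\log N_f\,(\kappa\log\kappa-\kappa+1)\bigr)=N_f^{-a},\qquad a=a(\kappa):=\kappa\log\kappa-\kappa+1>0.
\]
We then use the union bound over $\mu\in S_\alpha$, rather than the binomial term, to bound the probability of any false negative by $L N_f^{-a}$. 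This event contains $\{F_-^{(\alpha)}\ge m\}$ and suffices for the statement.

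\emph{False positives.} Let $E:=\{\|\eta^{(\alpha)}\|_1\le C\log N_f\}$. Its complement contributes the middle term; one may take $C=2L$, and since $\|\eta^{(\alpha)}\|_1\le\sum_{\nu\in S_\alpha}|\xi^\nu|_1$, a Chernoff bound shows this term vanishes. Conditionally on $\eta^{(\alpha)}$, the strokes $\xi^\mu$ with $\mu\notin S_\alpha$ are independent of $\eta^{(\alpha)}$ and of each other. On $E$, $O_\mu\sim\mathrm{Bin}(k,p)$ with $k\le C\log N_f$, so its mean is at most $C(\log N_f)^2/N_f$. Hence
\[
\mathbb P(O_\mu\ge\theta\mid\eta^{(\alpha)})\le \binom{k}{\lceil\theta\rceil}p^{\lceil\theta\rceil}\le \Bigl(\frac{eC\log N_f\cdot \log N_f}{\kappa\log N_f\cdot N_f}\Bigr)^{\kappa\log N_f}\le e^{-c(\log N_f)^2}
\]
for large $N_f$, with any $c<\kappa$; this gives $\overline q_+$ on $E$. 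Conditionally on $\eta^{(\alpha)}\in E$, the indicators $x_\mu$, $\mu\notin S_\alpha$, are independent Bernoulli variables with parameter at most $\overline q_+$. Summing over $(\rho+1)$-subsets then gives
\[
\mathbb P(F_+\ge\rho+1\mid\eta^{(\alpha)})\le \binom{M-L}{\rho+1}\overline q_+^{\rho+1}\le \frac{((M-L)\overline q_+)^{\rho+1}}{(\rho+1)!}.
\]
This is the argument of Proposition~\ref{lem:G-fixed}, repeated with the conditioning restricted to $E$.

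\emph{Assembling.} Splitting on $E$ and $E^c$ yields the three-term bound. If $M\le N_f^{K}$, then $(M-L)e^{-c(\log N_f)^2}\to0$, and all terms vanish.

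The main obstacle is the false-positive step. It requires the conditional-independence argument, which is clean because each $O_\mu$ for $\mu\notin S_\alpha$ depends only on $\xi^\mu$ and on $\eta^{(\alpha)}$, which is independent of $\xi^\mu$. It also requires the elementary estimate showing that a binomial with mean of order $(\log N_f)^2/N_f$ exceeds $\kappa\log N_f$ with probability only $N_f^{-\kappa\log N_f(1+o(1))}$.
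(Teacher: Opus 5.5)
Your proposal is correct and follows essentially the paper's route: a Chernoff lower-tail bound for $q_-^{\mathrm{true}}$, restriction to the cue-size event $E_C=\{\|\eta^{(\alpha)}\|_1\le C\log N_f\}$, a conditional binomial tail giving $\overline q_+=e^{-c(\log N_f)^2}$, the factorial-moment (subset) bound for $F_+^{(\alpha)}$, and a final split on $E_C$ versus $E_C^c$. Your version is slightly cleaner than the paper's. The paper applies Proposition~\ref{lem:G-fixed} conditionally on $E_C$, and that conditioning also affects the law of the true strokes. You instead bound the false negatives unconditionally by $L N_f^{-a}$ and restrict only the false-positive term to $E_C$, which yields exactly the displayed three-term bound.
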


\subsection{Exact stroke recovery as a benchmark}\label{subsec:exact_recovery}

For comparison, we now record a stronger statement in which the stroke layer is
required to recover the active stroke set exactly from a clean concept cue.
This is a substantially more demanding requirement than robust concept retrieval,
since it enforces uniform control over the entire stroke dictionary. As a
consequence, the admissible growth of $M$ is more restrictive.

\begin{proposition}
	\label{prop:strict-exact}
	Fix $\kappa\in(0,1)$ and set $\theta=\kappa\log N_f$.
	Under our assumptions for a concept $\alpha$ let the clean cue
	$\eta^{(\alpha)}$ be given by \eqref{eq:concept-or}, and define stroke activations by
	\eqref{eq:stroke-threshold-rule}.
	Then, there exists $a=a(\kappa)>0$ such that, if $M\le N_f^{a-\varepsilon}$ for some
	$\varepsilon>0$, the stroke layer recovers the active stroke set exactly from the clean
	cue, i.e.
	\[
	x_\mu(\eta^{(\alpha)})=\mathbf 1\{\mu\in S_\alpha\}\qquad\text{for all }\mu\le M,
	\]
	with probability $1-o(1)$ as $N_f\to\infty$.
\end{proposition}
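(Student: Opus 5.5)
Exact recovery means $F_-^{(\alpha)}=0$ and $F_+^{(\alpha)}=0$ simultaneously. I would bound the complementary event by a union bound over the $M$ strokes, splitting it into false negatives and false positives. The false-positive part is handled through Proposition~\ref{prop:G-fixed-sparse}; the false-negative part is where the exponent $a(\kappa)$ comes from.

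\textbf{False negatives.} For $\mu\in S_\alpha$ one has $\xi^\mu\le\eta^{(\alpha)}$ coordinatewise, so $O_\mu(\eta^{(\alpha)})=|\xi^\mu|_1\sim\mathrm{Bin}(N_f,p)$, whose mean is $\log N_f$. A stroke is missed only if $|\xi^\mu|_1<\kappa\log N_f$. The Chernoff lower-tail bound gives
\[
\mathbb P\bigl(|\xi^\mu|_1<\kappa\log N_f\bigr)\le N_f^{-a},
\qquad a:=\kappa\log\kappa-\kappa+1=:I(\kappa)>0 ,
\]
since $\kappa<1$. This is the same $a$ as in Proposition~\ref{prop:G-fixed-sparse}. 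With $L$ fixed, a union bound over $S_\alpha$ gives $\mathbb P(F_-^{(\alpha)}\ge1)\le LN_f^{-a}\to0$.

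The condition $M\le N_f^{a-\varepsilon}$ is not needed for a single fixed concept, because only $L$ strokes are involved. It becomes relevant if $S_\alpha$ is random, or if one wants the statement uniformly over the dictionary. In that case I would bound instead $\mathbb P(\exists\mu\le M:|\xi^\mu|_1<\theta)\le MN_f^{-a}\le N_f^{-\varepsilon}$, which is presumably the reason for the hypothesis. I would write the proof with this dictionary-wide union bound, since it covers both readings.

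\textbf{False positives.} I would take the bound from Proposition~\ref{prop:G-fixed-sparse} with $\rho=0$. On $\{\|\eta^{(\alpha)}\|_1\le C\log N_f\}$, for $\mu\notin S_\alpha$ the stroke $\xi^\mu$ is independent of the cue. Hence $O_\mu$ is $\mathrm{Bin}(C\log N_f,p)$ or smaller, and
\[
\mathbb P(O_\mu\ge\kappa\log N_f)\le\binom{C\log N_f}{\kappa\log N_f}p^{\kappa\log N_f}\le e^{-c(\log N_f)^2}.
\]
A union bound over the $M-L$ outside strokes gives at most $Me^{-c(\log N_f)^2}\to0$, since $M$ is polynomial. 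For the weight of the cue, $\|\eta^{(\alpha)}\|_1\le\sum_{\nu\in S_\alpha}|\xi^\nu|_1$ is stochastically dominated by $\mathrm{Bin}(N_f,Lp)$, with mean $L\log N_f$. Taking $C=2L$, Chernoff makes $\mathbb P(\|\eta^{(\alpha)}\|_1>C\log N_f)$ polynomially small.

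\textbf{Conclusion and main obstacle.} Summing the three error terms gives $o(1)$. The main obstacle is the false-negative step. The lower-tail probability $N_f^{-a}$ is only polynomially small, in contrast to the super-polynomial false-positive bound, so it is the term that restricts $M$. Getting the exponent right requires a precise Chernoff estimate, namely the relative entropy of $\mathrm{Poisson}(\kappa\log N_f)$ versus $\mathrm{Poisson}(\log N_f)$, and careful bookkeeping of whether the union is taken over $S_\alpha$ or over all $M$ strokes.
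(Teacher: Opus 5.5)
Your proof is correct and takes the paper's route. False negatives are handled by a dictionary-wide Chernoff lower-tail union bound, as in Lemma~\ref{lem:no-light-strokes}, and false positives by the super-polynomial bound of Lemma~\ref{lem:false-positives}, union-bounded over the $M-L$ outside strokes. Your rate $a=\kappa\log\kappa-\kappa+1$ is sharper than the paper's $a=\frac12(1-\kappa)^2$, so it is not literally the same $a$ as in Proposition~\ref{prop:G-fixed-sparse}, but either choice suffices; you also correctly make explicit the cue-size event $\{\|\eta^{(\alpha)}\|_1\le C\log N_f\}$, which the paper's proof uses only implicitly.
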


\begin{remark}[Exact versus robust recovery]
		Proposition~\ref{prop:strict-exact} requires exact recovery of the active
	stroke set and, in particular, uniform control over all stored strokes
	$\{\xi^\mu\}_{\mu\le M}$. This yields an explicit polynomial sufficient
	condition on the dictionary size, namely
	$
	M \le N_f^{a(\kappa)-\varepsilon}.
	$
	By contrast, the robust retrieval theory only requires aggregate control of
	the stroke-layer errors together with the overlap condition from
	Theorem~\ref{thm:retrieve-concepts}. In particular, Corollary~\ref{cor:poly-gamma}
	shows that, if $P=N_f^r$ and $M=N_f^\gamma$, then robust retrieval of a fixed
	target concept is compatible with every exponent
	$
	\gamma>\nicefrac{r}{t+1},
	$
	while uniform retrieval over all concepts is compatible with every exponent
	$
	\gamma>\nicefrac{2r}{t+1},
	$
	provided the good event holds with high probability.
	Hence, whenever
	$
	\gamma>\max\big\{a(\kappa),\nicefrac{r}{t+1}\big\},
	$
	the regime $M=N_f^\gamma$ lies beyond the exact-recovery sufficient
	condition of Proposition~\ref{prop:strict-exact}, but is still admissible
	for robust retrieval of a fixed target concept. Likewise, every
	$
	\gamma>\max\big\{a(\kappa),\nicefrac{2r}{t+1}\big\}
	$
	lies beyond the exact-recovery sufficient condition and remains admissible
	for uniform robust retrieval over all concepts.

\end{remark}

The next remark explains why exact recovery over the full stored dictionary is
stronger than what the retrieval task actually requires.
 
\begin{remark}[Used versus stored strokes]
	\label{rem:used-strokes}
	Proposition~\ref{prop:strict-exact} enforces exact recovery uniformly over the
	entire stored stroke dictionary $\{\xi^\mu\}_{\mu\le M}$. For concept retrieval,
	however, this is stronger than necessary: only strokes that belong to at least
	one concept can influence the winner-take-all decision. Strokes that are never
	used in any concept are irrelevant for retrieval and therefore need not be
	recovered exactly.
	
	This suggests a refined exact-recovery statement in which uniform control is
	required only on the set of used strokes. The corresponding bound then depends
	on the number of used strokes rather than on the full dictionary size $M$.
\end{remark}

To make this precise, let
\[
\mathcal U := \bigcup_{\alpha=1}^P S_\alpha \subseteq \{1,\dots,M\}
\]
denote the set of used strokes, and write $U:=|\mathcal U|$ for its cardinality.
We then obtain the following refinement of Proposition~\ref{prop:strict-exact}.

\begin{proposition}[Exact recovery restricted to used strokes]
	\label{prop:strict-exact-used}
 In the above setting there exists $a=a(\kappa)>0$ such that, if $U\le N_f^{a-\varepsilon}$ for some
	$\varepsilon>0$, the stroke layer recovers every concept's stroke set exactly from its
	clean cue in the sense that, with probability $1-o(1)$,
	\[
	x_\mu(\eta^{(\alpha)})=\mathbf 1\{\mu\in S_\alpha\}
	\qquad\text{for all }\alpha\le P\text{ and all }\mu\in\mathcal U.
	\]
	Moreover, for each fixed $\alpha$, the same conclusion holds without the union over
	$\alpha$ (i.e.\ only for the fixed cue $\eta^{(\alpha)}$).
	
	Since $U:=|\mathcal U|\le PL$, the strict exact-recovery conclusion holds in particular
	whenever
	\[
	PL \le N_f^{a-\varepsilon}.
	\]
\end{proposition}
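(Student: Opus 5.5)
The plan is to copy the proof of Proposition~\ref{prop:strict-exact} and change only the index set of the final union bound, which now runs over the used strokes $\mathcal U$ instead of over all of $\{1,\dots,M\}$. For each cue $\eta^{(\alpha)}$ and each $\mu\in\mathcal U$ there are two kinds of error, and I treat them separately.

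\emph{False negatives} ($\mu\in S_\alpha$). Since $\eta^{(\alpha)}\ge\xi^\mu$ coordinatewise, we have $O_\mu(\eta^{(\alpha)})\ge|\xi^\mu|_1$. Hence $x_\mu=0$ forces $|\xi^\mu|_1<\kappa\log N_f$. Because $|\xi^\mu|_1\sim\mathrm{Bin}(N_f,\log N_f/N_f)$ has mean $\log N_f$, a Chernoff lower-tail bound gives
\[
\mathbb P\bigl(|\xi^\mu|_1<\kappa\log N_f\bigr)\le N_f^{-a_1}, \qquad a_1=\kappa\log\kappa-\kappa+1>0.
\]
Every such event concerns a used stroke $\mu\in\mathcal U$, and it does not depend on $\alpha$. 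A union bound over $\mathcal U$ therefore costs at most $U N_f^{-a_1}$.

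\emph{False positives} ($\mu\in\mathcal U\setminus S_\alpha$). Condition on $(\xi^\nu)_{\nu\in S_\alpha}$. The stroke $\xi^\mu$ is independent of this conditioning, so $O_\mu(\eta^{(\alpha)})\sim\mathrm{Bin}(\|\eta^{(\alpha)}\|_1,p)$. On the event $\{\|\eta^{(\alpha)}\|_1\le C\log N_f\}$, with $C=C(L)$ chosen as in Proposition~\ref{prop:G-fixed-sparse}, this binomial has mean at most $C(\log N_f)^2/N_f$. The probability that it reaches $\kappa\log N_f$ is then at most
\[
\binom{C\log N_f}{\kappa\log N_f}p^{\kappa\log N_f}\le e^{-c(\log N_f)^2},
\]
which is super-polynomially small. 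The complementary event $\{\|\eta^{(\alpha)}\|_1> C\log N_f\}$ has probability at most $L$ times an upper Chernoff tail for $|\xi^\nu|_1$, i.e.\ $L N_f^{-b}$, and $b$ can be made as large as we like by increasing $C$.

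\emph{Fixed $\alpha$.} Summing the pieces, the failure probability is at most
\[
U N_f^{-a_1}+L N_f^{-b}+U e^{-c(\log N_f)^2}.
\]
Set $a=a_1$. If $U\le N_f^{a-\varepsilon}$, the first term is at most $N_f^{-\varepsilon}\to0$, and the other two terms vanish for polynomially bounded $U$.

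\emph{Uniform over all $\alpha\le P$.} The false-negative events do not depend on $\alpha$, so their cost is still $U N_f^{-a}$ with no extra factor. The false-positive events require a union over all pairs $(\alpha,\mu)$. Since $P\le U$ (concepts are distinct subsets of $\mathcal U$ of fixed size $L$, so $P\le\binom{U}{L}\le U^L$), this costs at most
\[
U^{L+1}\bigl(e^{-c(\log N_f)^2}+N_f^{-b}\bigr).
\]
This is $o(1)$ provided $C$ is chosen so that $b>(L+1)a$. Finally, $U\le PL$ immediately gives the last claim of the proposition.

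The main obstacle is the uniform statement over $\alpha$. The false-negative bound is only polynomial, so it must not be multiplied by $P$. The key step is to observe that $x_\mu=0$ for $\mu\in S_\alpha$ implies the $\alpha$-free event $\{|\xi^\mu|_1<\theta\}$, so a union over $\mathcal U$ alone suffices. All the $\alpha$-dependence is then pushed into the false-positive and cue-size terms, which are super-polynomially small or can be made polynomially small of any order.
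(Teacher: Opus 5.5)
Your argument is correct and follows the paper's route: rerun the proof of Proposition~\ref{prop:strict-exact} with the false-positive union bound restricted to $\mathcal U$. You are in fact more careful than the paper on the uniform-in-$\alpha$ claim, since you note that the false-negative events $\{|\xi^\mu|_1<\theta\}$ do not depend on $\alpha$ (which is exactly where $U\le N_f^{a-\varepsilon}$ is used) and absorb the remaining union over $\alpha$ by taking $C$ large. The one slip is the phrase ``$P\le U$'', which is false in general; what you actually use, correctly, is that there are at most $\binom{U}{L}\le U^L$ distinct concept sets, and duplicate concepts give identical cues and hence identical events.
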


Even this refinement of considering used strokes only still imposes an exact-recovery requirement at the stroke
level. The robust retrieval results above show that such exact control is in
fact unnecessary for successful concept decoding.

\section{Proofs for results in Section~\ref{sec:fixed}}
\label{sec:proofs}

This section contains the proofs of the results stated in Section~\ref{sec:fixed}.
We follow the same order as there. We begin with the robust concept-retrieval
results, whose logic separates into a deterministic score-separation argument,
a probabilistic overlap bound, and a control of the good event
$\mathcal G_\alpha(\delta,\rho)$. We then turn to the exact stroke-recovery
statements, first for the full dictionary and then for the refinement to used
strokes only.

\subsection{Robust concept retrieval}
We start with the deterministic part of the argument. The first proposition shows
that successful decoding follows once the target concept is sufficiently well
represented at the stroke layer and competing concepts have limited overlap with
the target.

\begin{proof}[Proof of Proposition~\ref{lem:score-separation}]
	On the event $\mathcal G_\alpha(\delta,\rho)$, we have
	\[
	F_-^{(\alpha)}\le \delta L
	\qquad\text{and}\qquad
	F_+^{(\alpha)}\le \rho.
	\]
	Hence
	\[
	H_\alpha(x)
	=
	\sum_{\mu\in S_\alpha}x_\mu
	=
	L-F_-^{(\alpha)}
	\ge (1-\delta)L.
	\]
	On the other hand, for every $\beta\neq\alpha$,
	\[
	H_\beta(x)
	=
	\sum_{\mu\in S_\beta\cap S_\alpha}x_\mu
	+
	\sum_{\mu\in S_\beta\setminus S_\alpha}x_\mu
	\le
	|S_\beta\cap S_\alpha|
	+
	\sum_{\mu\notin S_\alpha}x_\mu
	\le
	t_*^{(\alpha)}+\rho.
	\]
	Therefore,
	\[
	H_\alpha(x)-H_\beta(x)
	\ge
	(1-\delta)L-t_*^{(\alpha)}-\rho,
	\]
	which proves \eqref{eq:score_sep}. In particular, if
	\[
	t_*^{(\alpha)}+\rho<(1-\delta)L,
	\]
	then $H_\alpha(x)>H_\beta(x)$ for every $\beta\neq\alpha$, and hence
	$\widehat\alpha(x)=\alpha$.
\end{proof}

We next control the geometric contribution to the retrieval error, namely the
maximal overlap between the target concept and its competitors. In the i.i.d.\
concept model, this is given by the following combinatorial bound.

\begin{lemma}[Overlap bound]
	\label{lem:overlap}
	Fix $\alpha$ and let $S_\alpha,S_\beta$ be independent uniform $L$-subsets of
	$\{1,\dots,M\}$. For any $t\in\{0,\dots,L-1\}$,
	\[
	\mathbb P\bigl(|S_\beta\cap S_\alpha|\ge t+1\bigr)
	\le
	\binom{L}{t+1}\left(\frac{L}{M}\right)^{t+1}.
	\]
	Consequently,
	\[
	\mathbb P\Bigl(\max_{\beta\neq\alpha}|S_\beta\cap S_\alpha|\ge t+1\Bigr)
	\le
	P\,\binom{L}{t+1}\left(\frac{L}{M}\right)^{t+1}.
	\]
\end{lemma}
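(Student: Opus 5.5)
We condition on $S_\alpha$ and use a union bound over the $(t+1)$-subsets of $S_\alpha$. Since $S_\beta$ is independent of $S_\alpha$, we may fix $S_\alpha$ as an arbitrary deterministic $L$-subset and bound the conditional probability uniformly. If $|S_\beta\cap S_\alpha|\ge t+1$, then some $(t+1)$-element subset $T\subseteq S_\alpha$ satisfies $T\subseteq S_\beta$. There are $\binom{L}{t+1}$ choices of $T$, so
\[
\mathbb P\bigl(|S_\beta\cap S_\alpha|\ge t+1\,\big|\,S_\alpha\bigr)\le \binom{L}{t+1}\max_{|T|=t+1}\mathbb P(T\subseteq S_\beta).
\]

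The key step is the exact computation for a fixed set $T$ with $|T|=k:=t+1\le L$. For a uniform $L$-subset of $\{1,\dots,M\}$,
\[
\mathbb P(T\subseteq S_\beta)=\frac{\binom{M-k}{L-k}}{\binom{M}{L}}=\prod_{j=0}^{k-1}\frac{L-j}{M-j}.
\]
Each factor satisfies $\frac{L-j}{M-j}\le \frac{L}{M}$, because $L\le M$ gives $(L-j)M\le L(M-j)$, which reduces to $jL\le jM$. Hence $\mathbb P(T\subseteq S_\beta)\le (L/M)^{t+1}$. Integrating out $S_\alpha$ then yields the first bound.

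For the maximum over competitors, we take a union bound over the at most $P-1\le P$ indices $\beta\neq\alpha$. For each $\beta$ we apply the first bound, which requires only that the pair $(S_\alpha,S_\beta)$ consists of independent uniform $L$-subsets, as in the i.i.d.\ concept model. No joint independence among the different $\beta$ is needed. This gives the factor $P$.

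There is no real obstacle here. The only point needing care is the product formula together with the monotonicity inequality $\frac{L-j}{M-j}\le\frac LM$, which relies on $L\le M$.
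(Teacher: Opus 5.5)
Your proposal is correct and follows essentially the same route as the paper. Both arguments use a union bound over the $\binom{L}{t+1}$ subsets $T\subseteq S_\alpha$, the exact formula $\mathbb P(T\subseteq S_\beta)=\prod_{j=0}^{t}\frac{L-j}{M-j}\le (L/M)^{t+1}$, and a second union bound over $\beta\neq\alpha$; your explicit conditioning on $S_\alpha$ and your check of the factor-wise inequality only spell out steps the paper leaves implicit.
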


\begin{proof}
	Let $\beta\neq\alpha$ and write
	\[
	X_\beta:=|S_\beta\cap S_\alpha|.\]
	If $X_\beta\ge t+1$, then there exists a subset
	$T\subseteq S_\alpha$ with $|T|=t+1$ such that $T\subseteq S_\beta$.
	Hence
	\[
	\{X_\beta\ge t+1\}
	\subseteq
	\bigcup_{\substack{T\subseteq S_\alpha\\|T|=t+1}}
	\{T\subseteq S_\beta\}.
	\]
	By a union bound,
	\[
	\mathbb P(X_\beta\ge t+1)
	\le
	\binom{L}{t+1}\,\mathbb P(T\subseteq S_\beta),
	\]
	where $T$ is any fixed $(t+1)$-subset of $S_\alpha$.
	Since $S_\beta$ is uniformly distributed among all $L$-subsets of
	$\{1,\dots,M\}$,
	\[
	\mathbb P(T\subseteq S_\beta)
	=
	\frac{\binom{M-(t+1)}{L-(t+1)}}{\binom{M}{L}}
	=
	\prod_{k=0}^{t}\frac{L-k}{M-k}
	\le
	\left(\frac{L}{M}\right)^{t+1}.
	\]
	Therefore
	\[
	\mathbb P(X_\beta\ge t+1)
	\le
	\binom{L}{t+1}\left(\frac{L}{M}\right)^{t+1}.
	\]
	A second union bound over $\beta\neq\alpha$ gives
	\[
	\mathbb P\Bigl(\max_{\beta\neq\alpha}|S_\beta\cap S_\alpha|\ge t+1\Bigr)
	\le
	P\,\binom{L}{t+1}\left(\frac{L}{M}\right)^{t+1}.
	\]	
\end{proof}

The main theorem of the previous section is then obtained by combining this deterministic separation
criterion with the overlap estimate for the random concept family.

\begin{proof}[Proof of Theorem~\ref{thm:retrieve-concepts}]
	By Proposition~\ref{lem:score-separation}, retrieval succeeds whenever
	$\mathcal G_\alpha(\delta,\rho)$ holds and $t_*^{(\alpha)}\le t$.
	Hence,
	\[
	\mathbb P(\widehat\alpha(x)\neq\alpha)
	\le
	\mathbb P(t_*^{(\alpha)}\ge t+1)
	+
	\mathbb P(\mathcal G_\alpha(\delta,\rho)^c).
	\]
	The overlap term is bounded by Lemma~\ref{lem:overlap}, which yields
	\[
	\mathbb P(t_*^{(\alpha)}\ge t+1)
	\le
	P\binom{L}{t+1}\left(\frac{L}{M}\right)^{t+1}.
	\]
	This proves \eqref{eq:thm-success-prob-lower} and \eqref{eq:thm-overlap-bound}.
	If, in addition, the two conditions in \eqref{eq:thm-capacity-conditions} hold,
	then both terms on the right-hand side vanish uniformly in $\alpha$, and therefore
	\[
	\max_{\alpha\le P}\mathbb P(\widehat\alpha(x)\neq\alpha)\xrightarrow[N_f\to\infty]{}0.
	\]
\end{proof}

We now prove Corollary \ref{cor:poly-gamma}.

\begin{proof}[Proof of Corollary~\ref{cor:poly-gamma}]
	By Theorem~\ref{thm:retrieve-concepts},
	\[
	\mathbb P(\widehat\alpha(x)\neq\alpha)
	\le
	P\binom{L}{t+1}\left(\frac{L}{M}\right)^{t+1}
	+
	\mathbb P(\mathcal G_\alpha(\delta,\rho)^c).
	\]
	Since $L$ and $t$ are fixed, the first term is of order
	\[
	N_f^r \cdot N_f^{-\gamma(t+1)}
	=
	N_f^{\,r-\gamma(t+1)}.
	\]
	Thus it tends to zero whenever $\gamma>r/(t+1)$, which proves (i).
	
	For the uniform statement, take the maximum over $\alpha\le P$ and use again
	Theorem~\ref{thm:retrieve-concepts}. This introduces an additional factor $P$,
	so the overlap term is of order
	\[
	P^2\left(\frac{L}{M}\right)^{t+1}
	\asymp
	N_f^{\,2r-\gamma(t+1)}.
	\]
	Hence it tends to zero whenever $\gamma>2r/(t+1)$, proving (ii).
\end{proof}

\subsection{Proofs for the control of the good event}
It remains to control the second term in
Theorem~\ref{thm:retrieve-concepts}, namely the probability that the good event
$\mathcal G_\alpha(\delta,\rho)$ fails. We first formulate a convenient bound in
terms of the envelope probabilities $q_-$ and $q_+$, and then derive the
auxiliary estimates needed to verify it.

\begin{proof}[Proof of Proposition~\ref{lem:G-fixed}]
A union bound gives
		\[
		\mathbb P\bigl(\mathcal G_\alpha(\delta,\rho)^c\bigr)
		\le
		\mathbb P\bigl(F_-^{(\alpha)}\ge m\bigr)
		+
		\mathbb P\bigl(F_+^{(\alpha)}>\rho\bigr),
		\]
		where $m=\lfloor \delta L\rfloor+1$.
		
		We first consider the false negatives. For $\mu\in S_\alpha$, the clean cue
		satisfies $\eta_i^{(\alpha)}\ge \xi_i^\mu$ for every $i$, hence $\eta_i^{(\alpha)}\xi_i^\mu=\xi_i^\mu$
		for all $i$.
		Therefore
		\[
		O_\mu
		=
		\sum_{i=1}^{N_f}\eta_i^{(\alpha)}\xi_i^\mu
		=
		\sum_{i=1}^{N_f}\xi_i^\mu
		=
		|\xi^\mu|_1.
		\]
		It follows that $x_\mu=0$ if and only if 
        $|\xi^\mu|_1<\theta$.
		Thus, for $\mu\in S_\alpha$,
		\[
		Y_\mu:=\mathbf 1\{x_\mu=0\}
		=
		\mathbf 1\{|\xi^\mu|_1<\theta\}.
		\]
		Since the strokes $(\xi^\mu)_{\mu\in S_\alpha}$ are i.i.d., the variables
		$(Y_\mu)_{\mu\in S_\alpha}$ are i.i.d.\ Bernoulli with parameter
		$q_-^{\mathrm{true}}$. Hence
		\[
		F_-^{(\alpha)}=\sum_{\mu\in S_\alpha}Y_\mu
		\sim \mathrm{Bin}(L,q_-^{\mathrm{true}}).
		\]
		Therefore
		\[
		\mathbb P\bigl(F_-^{(\alpha)}\ge m\bigr)
		\le
		\binom{L}{m}\bigl(q_-^{\mathrm{true}}\bigr)^m.
		\]
		
		We now turn to the false positives. Conditional on $\eta^{(\alpha)}$, the
		variables $(x_\mu)_{\mu\notin S_\alpha}$ are independent Bernoulli random
		variables, and by assumption each of them has success probability at most
		$\overline q_+$. Hence, conditional on $\eta^{(\alpha)}$,
		\[
		F_+^{(\alpha)}
		\preceq_{\mathrm{st}}
		\mathrm{Bin}(M-L,\overline q_+).
		\]
		Thus, using the falling-factorial moment bound for binomial tails, we obtain
		\[
		\mathbb P\bigl(F_+^{(\alpha)}>\rho \mid \eta^{(\alpha)}\bigr)
		\le
		\frac{\bigl((M-L)\overline q_+\bigr)^{\rho+1}}{(\rho+1)!}.
		\]
		Indeed, if $B\sim\mathrm{Bin}(n,q)$, then
		\[
		\mathbb P(B>\rho)
		=
		\mathbb P(B\ge \rho+1)
		=
		\mathbb P\bigl((B)_{\rho+1}\ge (\rho+1)!\bigr)
		\le
		\frac{\mathbb E[(B)_{\rho+1}]}{(\rho+1)!}
		\le
		\frac{(nq)^{\rho+1}}{(\rho+1)!},
		\]
		where $(B)_{\rho+1}=B(B-1)\cdots(B-\rho)$.
		
		Taking expectations yields
		\[
		\mathbb P\bigl(F_+^{(\alpha)}>\rho\bigr)
		\le
		\frac{\bigl((M-L)\overline q_+\bigr)^{\rho+1}}{(\rho+1)!}.
		\]
		
		Combining the two estimates proves the first claim. The final implication is immediate.
\end{proof}

To estimate the false-positive envelope probability $q_+$ in the sparse stroke
model, we use the following binomial tail bound.
This bound could actually also be obtained by Chernoff's inequality. We prove it to keep the paper self-contained. 
\begin{lemma}
	\label{lem:binom-tail}
	Let $X\sim\mathrm{Bin}(T_{\mathrm{ bin}},p_{\mathrm{bin}})$ with $T_{\mathrm {bin}}\in\mathbb N$, $p_{\mathrm{bin}}\in(0,1)$.
    Then for every integer $k>Tp$,
	\[
	\mathbb P(X\ge k)\le \left(\frac{eT_{\mathrm{bin}}p_{\mathrm{bin}}}{k}\right)^k .
	\]
\end{lemma}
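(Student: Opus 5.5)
The plan is to use the standard exponential-moment (Chernoff) argument, with the tilt parameter chosen so that the resulting bound collapses to the stated form. Write $T:=T_{\mathrm{bin}}$ and $p:=p_{\mathrm{bin}}$ for brevity; the hypothesis $k>Tp$ should read $k>T_{\mathrm{bin}}p_{\mathrm{bin}}$.

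For any $\lambda>0$, Markov's inequality applied to $e^{\lambda X}$ gives
\[
\mathbb P(X\ge k)\le e^{-\lambda k}\,\mathbb E\bigl[e^{\lambda X}\bigr].
\]
Independence of the $T$ Bernoulli summands gives
\[
\mathbb E\bigl[e^{\lambda X}\bigr]=\bigl(1+p(e^\lambda-1)\bigr)^{T}.
\]
We then use $1+y\le e^y$ to bound this by $\exp\bigl(Tp(e^\lambda-1)\bigr)$. Altogether,
\[
\mathbb P(X\ge k)\le \exp\bigl(Tp(e^\lambda-1)-\lambda k\bigr).
\]

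Next we optimise over $\lambda$. The exponent is minimised at $e^\lambda=k/(Tp)$, which is admissible ($\lambda>0$) precisely because $k>Tp$. Substituting this value gives
\[
\mathbb P(X\ge k)\le \exp\bigl(k-Tp\bigr)\Bigl(\frac{Tp}{k}\Bigr)^k=e^{-Tp}\Bigl(\frac{eTp}{k}\Bigr)^k .
\]
Dropping the factor $e^{-Tp}\le 1$ yields the claimed bound $\bigl(eT_{\mathrm{bin}}p_{\mathrm{bin}}/k\bigr)^k$.

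A purely combinatorial route would also work. By a union bound over $k$-subsets of the trials,
\[
\mathbb P(X\ge k)\le \binom{T}{k}p^k\le \Bigl(\frac{eT}{k}\Bigr)^k p^k ,
\]
using $\binom{T}{k}\le T^k/k!$ and $k!\ge (k/e)^k$. This matches the factorial-moment argument used in the proof of Proposition~\ref{lem:G-fixed} and does not even need $k>Tp$.

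There is no real obstacle in either route. The only point needing care is checking that the optimising $\lambda$ is positive, which is where $k>Tp$ enters; the combinatorial alternative avoids even that.
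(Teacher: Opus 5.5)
Your proof is correct, but your primary argument differs from the paper's. The paper's proof is essentially your ``combinatorial alternative''. It writes $\mathbb P(X\ge k)$ as the binomial sum and uses $\binom{T}{j}\le\binom{T}{k}\binom{T-k}{j-k}$ for $j\ge k$. Summing the remaining binomial weights to $1$ gives $\mathbb P(X\ge k)\le\binom{T}{k}p^k$, which is an algebraic form of your union bound over $k$-subsets of trials. It then applies $\binom{T}{k}\le(eT/k)^k$.

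The exponential-moment route you lead with is the one the paper only mentions in passing before the lemma. What it buys is the retained factor $e^{-Tp}$, and that factor matters in Lemma~\ref{lem:cue-size-upper}. Since $\mu\mapsto e^{-\mu}(e\mu/k)^k$ is increasing on $(0,k)$, you may insert the upper bound $L\log N_f$ for the mean there. This gives $\mathbb P(\|\eta^{(\alpha)}\|_1\ge C\log N_f)\le N_f^{-L\,g(C/L)}$ with $g(r)=1-r+r\log r>0$ for $r>1$. So that lemma holds for every $C>L$, whereas the bound $\binom{T}{k}p^k$ is useless for $L<C<eL$ and forces $C>eL$.

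The paper's route, in turn, involves no optimisation and no hypothesis on $k$. In the regime $Tp\to 0$ of Lemma~\ref{lem:false-positives}, its intermediate bound $\binom{T}{k}p^k\le (Tp)^k/k!$ is marginally sharper, by the ratio of $k!$ to $(k/e)^k$.

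Finally, the hypothesis $k>Tp$ is not needed for the statement itself: for $k\le Tp$ the right-hand side is at least $e^k\ge 1$.
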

\begin{proof}
	We have $
	\mathbb P(X\ge k)
	= \sum_{j=k}^{T_{\mathrm{bin}}} \binom{T_{\mathrm{bin}}}{j} p_{\mathrm{bin}}^j (1-p)^{T_{\mathrm{bin}}-j}$.
	Now, for every $j\ge k$,
	\[
	\binom{T_{\mathrm{bin}}}{j}\le \binom{T_{\mathrm{bin}}}{k}\binom{T_{\mathrm{bin}}-k}{j-k}.
	\]
	Therefore,
	\[
	\mathbb P(X\ge k)
	\le \binom{T_{\mathrm{bin}}}{k}p_{\mathrm{bin}}^k
	\sum_{j=k}^{T_{\mathrm{bin}}} \binom{T_{\mathrm{bin}}-k}{j-k} p_{\mathrm{bin}}^{\,j-k}(1-p)^{T_{\mathrm{bin}}-j}.
	\]
	With the change of variables $m=j-k$, the sum becomes $
	\sum_{m=0}^{T_{\mathrm{bin}}-k}\binom{T_{\mathrm{bin}}-k}{m}p_{\mathrm{bin}}^m(1-p)^{(T_{\mathrm{bin}}-k)-m}=1.$
	Hence
	\[
	\mathbb P(X\ge k)\le \binom{T_{\mathrm{bin}}}{k}p_{\mathrm{bin}}^k.
	\]
	Finally, using the  bound
	$\binom{T_{\mathrm{bin}}}{k}\le \left(\frac{eT_{\mathrm{bin}}}{k}\right)^k$,
	we obtain the claim.
\end{proof}

{We now estimate the one-stroke false-positive probability $q_+$ in the sparse
	stroke model. The next lemma shows that a stroke outside the target concept
	fires only with super-polynomially small probability. This is the key input for
	controlling the total number of spurious activations in the stroke layer.}

To prepare the next step we need to bound the typical size of a clean cue:

\begin{lemma}
	\label{lem:cue-size-upper}
	Fix a concept $\alpha$ with $|S_\alpha|=L$, and let
	$T:=\|\eta^{(\alpha)}\|_1$
	denote the number of active coordinates in the clean cue
	$\eta^{(\alpha)}$. Then, under our standing assumptions for every $C>eL$  and all $N_f$ large enough,
	\[
	\mathbb P\bigl(T> C\log N_f\bigr)
	\le
	\exp\bigl(-c(C,L)\log N_f\bigr)
	= N_f^{-c(C,L)}
	\]
   In particular, for such $C$ we have 
	\[
\mathbb P\bigl(T\le C\log N_f\bigr)\xrightarrow[N_f\to\infty]{}1.
\]
\end{lemma}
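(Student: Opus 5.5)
We bound $T$ by the sum of the stroke weights, using the OR structure, and then apply the binomial tail bound of Lemma~\ref{lem:binom-tail}. Since $\eta^{(\alpha)}_i=\bigvee_{\nu\in S_\alpha}\xi^\nu_i\le\sum_{\nu\in S_\alpha}\xi^\nu_i$ coordinatewise, we get
\[
T\le \sum_{\nu\in S_\alpha}|\xi^\nu|_1=:X .
\]
Conditionally on $S_\alpha$, the strokes in $S_\alpha$ are $L$ i.i.d.\ vectors with i.i.d.\ Bernoulli$(p)$ entries, $p=\log N_f/N_f$. Hence $X\sim\mathrm{Bin}(LN_f,p)$ with mean $LN_f p=L\log N_f$. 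Because this law does not depend on $S_\alpha$, the randomness of the concept family plays no role. (Alternatively, each coordinate of $\eta^{(\alpha)}$ is an independent Bernoulli variable with parameter $1-(1-p)^L\le Lp$, so $T$ is stochastically dominated by the same binomial. Either route works; the first is the simplest.)

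Next, fix $C>eL$ and set $k:=\lceil C\log N_f\rceil$. Since $C>eL>L$, we have $k>LN_fp=L\log N_f$, so Lemma~\ref{lem:binom-tail} applies with $T_{\mathrm{bin}}=LN_f$, $p_{\mathrm{bin}}=p$:
\[
\mathbb P(T>C\log N_f)\le \mathbb P(X\ge k)\le\Bigl(\frac{eL\log N_f}{k}\Bigr)^{k}\le\Bigl(\frac{eL}{C}\Bigr)^{C\log N_f}.
\]
For the last inequality we use $k\ge C\log N_f$: the base is at most $eL/C<1$, and raising a number in $(0,1)$ to a larger power only decreases it. The right-hand side equals $N_f^{-c(C,L)}$ with
\[
c(C,L):=C\log\bigl(C/(eL)\bigr)>0.
\]
The condition that $N_f$ be large is used only to ensure $\log N_f>0$ and that the integer $k$ is well defined and positive. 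The final assertion follows by taking complements and letting $N_f\to\infty$.

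The only point requiring care is the threshold $C>eL$: it is exactly what makes the base $eL/C$ of the Chernoff-type bound smaller than one. That is why the lemma assumes $C>eL$ rather than merely $C>L$. I expect no genuine obstacle beyond checking the rounding to integer $k$ as done above.
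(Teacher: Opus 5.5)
Your proof is correct and is essentially the paper's argument. The paper takes your parenthetical route, using that the coordinates of $\eta^{(\alpha)}$ are i.i.d.\ Bernoulli$(1-(1-p)^L)$, so $T\sim\mathrm{Bin}(N_f,1-(1-p)^L)$ with mean at most $L\log N_f$; your pathwise domination $T\le\sum_{\nu\in S_\alpha}|\xi^\nu|_1\sim\mathrm{Bin}(LN_f,p)$ gives the same mean bound, and both then apply Lemma~\ref{lem:binom-tail} to reach $(eL/C)^{C\log N_f}$. Your choice $k=\lceil C\log N_f\rceil$ is slightly cleaner than the paper's $k=\lfloor C\log N_f\rfloor$, because it makes the final bound with exponent $C\log N_f$ literally valid.
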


\begin{proof}
	This is just a typical exponential estimate: 
	For each coordinate $i\in\{1,\dots,N_f\}$,
	$\eta_i^{(\alpha)}
	= \mathbf 1\Bigl\{\exists\,\nu\in S_\alpha:\ \xi_i^\nu=1\Bigr\}$.
	Since the stroke patterns are independent and $|S_\alpha|=L$, we have
	\[
	\mathbb P(\eta_i^{(\alpha)}=1)
	=
	1-(1-p)^L
	=:\pi_L.
	\]
	Moreover, the families $\{(\xi_i^\nu)_{\nu\in S_\alpha}\}_{i=1}^{N_f}$ are
	independent across $i$, hence the coordinates
	$\eta_i^{(\alpha)}$ are i.i.d.\ Bernoulli$(\pi_L)$. Therefore
	\[
	T=\sum_{i=1}^{N_f}\eta_i^{(\alpha)}\sim \mathrm{Bin}(N_f,\pi_L).
	\]
	Using Bernoulli's inequality, $\pi_L=1-(1-p)^L\le Lp$,
	and therefore $\mathbb E[T]\le LN_fp=L\log N_f$.
	
	Now, fix any $C>L$. Then for all sufficiently large $N_f$, $C\log N_f > \mathbb E[T]$.
	Applying Lemma~\ref{lem:binom-tail} with
	$X=T, T_{\mathrm{bin}}=N_f,p_{\mathrm{bin}}=\pi_L$,
	and $k=\lfloor C\log N_f\rfloor$,
	we obtain
	\[
	\mathbb P(T\ge C\log N_f)
	\le
	\left(\frac{e\,N_f\pi_L}{C\log N_f}\right)^{C\log N_f}.
	\]
	Since $N_f\pi_L\le L\log N_f$, this gives
	$
	\mathbb P(T\ge C\log N_f)
	\le
	\left(\frac{eL}{C}\right)^{C\log N_f}.
	$
	If $C>eL$, then $\frac{eL}{C}<1$, and hence
	\[
	\mathbb P(T\ge C\log N_f)
	\le
	\exp\!\left(-c(C,L)\log N_f\right)
	=
	N_f^{-c(C,L)}
	\]
	for some $c(C,L)>0$.
	In particular,
	\[
	\mathbb P(T\le C\log N_f)\to 1.
	\]
\end{proof}
%
%
\begin{lemma}
	\label{lem:false-positives}
	Fix $\alpha$ and assume $T:=\|\eta^{(\alpha)}\|_1 \le C \log N_f$ for a $C >eL$. For $0 <\kappa <1$  set $\theta=\kappa\log N_f$. Then for a false stroke
	$\mu\notin S_\alpha$,
	\[
	q_+ := \mathbb P(x_\mu=1\mid \eta^{(\alpha)})
	\le
	\exp\bigl(-c(\log N_f)^2\bigr)
	\]
	for some $c=c(\kappa,L)>0$ and all $N_f$ large enough.
	In particular, $q_+$ is super-polynomially small in $N_f$.
\end{lemma}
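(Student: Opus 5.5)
Conditionally on the clean cue $\eta^{(\alpha)}$, a stroke $\mu\notin S_\alpha$ is independent of $(\xi^\nu)_{\nu\in S_\alpha}$, and hence independent of $\eta^{(\alpha)}$. Its coordinates $\xi_i^\mu$ are i.i.d.\ Bernoulli$(p)$ with $p=\log N_f/N_f$. The overlap $O_\mu(\eta^{(\alpha)})=\sum_{i\in\supp(\eta^{(\alpha)})}\xi_i^\mu$ is a sum of $T$ of these coordinates. So, conditionally on $\eta^{(\alpha)}$,
\[
O_\mu\sim\mathrm{Bin}(T,p),\qquad q_+=\mathbb P(O_\mu\ge\theta\mid\eta^{(\alpha)}).
\]
The plan is to apply Lemma~\ref{lem:binom-tail} with $T_{\mathrm{bin}}=T$, $p_{\mathrm{bin}}=p$ and $k=\lceil\kappa\log N_f\rceil$. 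If $T<k$ then $q_+=0$ and there is nothing to prove. Otherwise $T\le C\log N_f$, so the mean satisfies
\[
Tp\le \frac{C(\log N_f)^2}{N_f}\to0,
\]
and in particular $k>Tp$ for large $N_f$, so the lemma applies.

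The lemma then gives
\[
q_+\le\Bigl(\frac{eTp}{k}\Bigr)^k\le\Bigl(\frac{eC\log N_f}{N_f}\Bigr)^{\kappa\log N_f}.
\]
Here I used $T\le C\log N_f$ and $k\ge\kappa\log N_f$. Since the base is below $1$ for large $N_f$, raising it to the power $k$ instead of $\kappa\log N_f$ only helps. Taking logarithms,
\[
\log q_+\le \kappa\log N_f\bigl(-\log N_f+\log\log N_f+\log(eC)\bigr)\le-\tfrac{\kappa}{2}(\log N_f)^2
\]
for $N_f$ large. This gives the claim with $c=\kappa/2$. The constant depends on $L$ (through $C$) only via the threshold "$N_f$ large enough", consistent with $c=c(\kappa,L)$.

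There is no real obstacle. The only points needing care are the conditional independence of $\xi^\mu$ from $\eta^{(\alpha)}$ for $\mu\notin S_\alpha$, and the integer rounding of $k$ together with the condition $k>Tp$ from Lemma~\ref{lem:binom-tail}, both handled above. Super-polynomial smallness follows because $e^{-c(\log N_f)^2}=N_f^{-c\log N_f}$.
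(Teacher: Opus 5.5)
Your proof is correct and is essentially the paper's argument: $\xi^\mu$ is independent of $\eta^{(\alpha)}$, so $O_\mu\mid\eta^{(\alpha)}\sim\mathrm{Bin}(T,p)$, and Lemma~\ref{lem:binom-tail} applied at level $\kappa\log N_f$ gives the bound. Your choice $k=\lceil\kappa\log N_f\rceil$ is slightly more careful than the paper, which plugs the possibly non-integer $\theta$ directly into that lemma. One harmless slip: $k\ge\kappa\log N_f$ only gives the base bound $eC\log N_f/(\kappa N_f)$, not $eC\log N_f/N_f$; this replaces $\log(eC)$ by $\log(eC/\kappa)$ and still yields $c=\kappa/2$ for $N_f$ large.
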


\begin{proof}
	Fix $\alpha$ and a ``false stroke'' $\mu\notin S_\alpha$. Recall that 
	by the assumptions, the stroke patterns $\{\xi^\nu\}_{\nu=1}^M$ are independent
	across $\nu$, and in particular $\xi^\mu$ is independent of
	$\{\xi^\nu:\nu\in S_\alpha\}$ and thus independent of $\eta^{(\alpha)}$.
	
	Now, conditional on
	$\eta^{(\alpha)}$ the coordinates $(\xi_i^\mu)_{i=1}^{N_f}$ remain i.i.d.\
	$\mathrm{Bernoulli}(p)$ and are independent of $\eta^{(\alpha)}$. 
	
	Therefore, conditional on $\eta^{(\alpha)}$,
	\[
	O_\mu=\sum_{i=1}^{N_f}\eta^{(\alpha)}_i\,\xi_i^\mu
	\ \Big|\ \eta^{(\alpha)}
	\ \sim\ \mathrm{Bin}(T,p).
	\]
	
	Set $\mathbb E[O_\mu\mid \eta^{(\alpha)}]=Tp.$
	By a Chernoff bound for binomial random variables, i.e.\ Lemma \ref{lem:binom-tail} above, for $k>Tp$, we obtain
	\[
	\mathbb P(O_\mu\ge k\mid \eta^{(\alpha)})
	\le \left(\frac{eTp}{k}\right)^k.
	\]
	With $k=\theta=\kappa\log N_f$ this implies
	\[
	q_+:=\max_{\mu\notin S_\alpha}\mathbb P(x_\mu=1\mid \eta^{(\alpha)})
	\le \left(\frac{eTp}{\kappa\log N_f}\right)^{\kappa\log N_f}.
	\]
	By assumption we have $Tp\le C (\log N_f)^2/N_f$, hence
	\[
	\log q_+ \le -\kappa(\log N_f)^2 + O(\log N_f\log\log N_f),
	\]
	i.e.\ $q_+$ is super-polynomially small in $N_f$.
\end{proof}

We next show that, with high probability, every stored stroke contains at least
$\theta$ active entries. This guarantees that each true stroke belonging to the
target concept necessarily exceeds the activation threshold under the clean cue.

\begin{lemma}[{Lower bound for binomials}]
	\label{lem:no-light-strokes}
	Let $\{\xi^\mu\}_\mu$
		be i.i.d.\ {Bernoulli}$(p)$-random variables with $p=\log N_f/N_f$, and set
		$\theta=\kappa\log N_f$ with $\kappa\in(0,1)$. Then with
		$a=\tfrac12(1-\kappa)^2$,
		\[
		\mathbb P\Bigl(\min_{1\le\mu\le M} |\xi^\mu|_1 \ge \theta\Bigr)
		\ \ge\ 1 - M N_f^{-a}.
		\]
		In particular, if $M\le N_f^{a-\varepsilon}$ for some $\varepsilon>0$, then
		\[
		\min_{1\le\mu\le M} |\xi^\mu|_1 \ge \theta
		\]
		with probability $1-o(1)$ as $N_f\to\infty$.
	\end{lemma}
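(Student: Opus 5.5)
The plan is to reduce the statement to a single lower-tail estimate for one binomial variable and then apply a union bound over the $M$ strokes. Each $|\xi^\mu|_1=\sum_{i=1}^{N_f}\xi_i^\mu$ is distributed as $\mathrm{Bin}(N_f,p)$, with mean $N_fp=\log N_f$. The union bound gives
\[
\mathbb P\Bigl(\min_{1\le\mu\le M}|\xi^\mu|_1<\theta\Bigr)\le M\,\mathbb P\bigl(\mathrm{Bin}(N_f,p)<\kappa\log N_f\bigr).
\]
It therefore suffices to show that $\mathbb P(\mathrm{Bin}(N_f,p)<\kappa\log N_f)\le N_f^{-a}$ with $a=\tfrac12(1-\kappa)^2$.

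For the single-stroke estimate I would use the standard multiplicative Chernoff lower-tail bound. For $X\sim\mathrm{Bin}(n,q)$ with mean $\lambda=nq$ and $0<\varepsilon<1$, it states $\mathbb P(X\le(1-\varepsilon)\lambda)\le \exp(-\varepsilon^2\lambda/2)$. To keep the paper self-contained, as was done for Lemma~\ref{lem:binom-tail}, I would include a short derivation. For $s>0$, Markov's inequality applied to $e^{-sX}$ gives $\mathbb P(X\le k)\le e^{sk}\,\mathbb E[e^{-sX}]$. Using $1+x\le e^x$, one has $\mathbb E[e^{-sX}]=(1-q+qe^{-s})^n\le \exp(\lambda(e^{-s}-1))$. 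Choosing $s=-\log(1-\varepsilon)$ yields the bound $\exp(-\lambda[(1-\varepsilon)\log(1-\varepsilon)+\varepsilon])$. The elementary inequality $(1-\varepsilon)\log(1-\varepsilon)+\varepsilon\ge \varepsilon^2/2$ on $[0,1)$ then finishes the derivation. This inequality holds because the difference vanishes at $0$ together with its first derivative, and its second derivative equals $1/(1-\varepsilon)-1\ge 0$.

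Now apply this with $n=N_f$, $q=p$, $\lambda=\log N_f$ and $\varepsilon=1-\kappa\in(0,1)$. The event $\{X<\kappa\log N_f\}$ is contained in $\{X\le(1-\varepsilon)\lambda\}$, so
\[
\mathbb P\bigl(|\xi^\mu|_1<\theta\bigr)\le \exp\bigl(-\tfrac12(1-\kappa)^2\log N_f\bigr)=N_f^{-a}.
\]
Combined with the union bound, this gives the first claim. For the second claim, if $M\le N_f^{a-\varepsilon}$ then $MN_f^{-a}\le N_f^{-\varepsilon}\to 0$.

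I expect no real obstacle. The only point needing care is the strict-versus-weak inequality at the threshold, and that is harmless because the bound is monotone. The sole analytic input is the elementary inequality used to pass to the clean exponent $\tfrac12(1-\kappa)^2$. One could instead quote the sharper rate $\kappa\log\kappa-\kappa+1$, but the stated $a$ is what the lemma asks for.
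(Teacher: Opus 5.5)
Your proposal is correct and follows the paper's proof essentially verbatim: since each $|\xi^\mu|_1\sim\mathrm{Bin}(N_f,p)$ has mean $\log N_f$, the multiplicative Chernoff lower-tail bound with deviation parameter $1-\kappa$ gives $\mathbb P(|\xi^\mu|_1<\theta)\le N_f^{-a}$, and a union bound over the $M$ strokes yields both claims. The only difference is that you derive the Chernoff bound (via the exponential Markov inequality and $(1-\varepsilon)\log(1-\varepsilon)+\varepsilon\ge\varepsilon^2/2$) instead of quoting it as the paper does; your derivation is correct, though you should rename its parameter so it does not clash with the $\varepsilon$ in $M\le N_f^{a-\varepsilon}$.
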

	
	\begin{proof}
		Fix $\mu$. Then $|\xi^\mu|_1=\sum_{i=1}^{N_f}\xi_i^\mu\sim\mathrm{Bin}(N_f,p)$ with
		$\mathbb E|\xi^\mu|_1=pN_f =\log N_f$.
		Let $\delta:=1-\kappa\in(0,1)$. By the standard Chernoff bound for binomials,
		\[
		\mathbb P\Bigl(|\xi^\mu|_1 \le (1-\delta)\mathbb E|\xi^\mu|_1\Bigr)
		\le \exp\!\left(-\frac{\delta^2}{2}\,\mathbb E|\xi^\mu|_1\right)
		= \exp\!\left(-\frac{(1-\kappa)^2}{2}\log N_f\right)
		= N_f^{-a},
		\]
		where $a=\tfrac12(1-\kappa)^2$.
		Since the strokes $\{|\xi^\mu|_1\}_{\mu=1}^M$ are identically distributed, a union bound gives
		\[
		\mathbb P\Bigl(\min_{1\le\mu\le M}|\xi^\mu|_1<\theta\Bigr)
		\le \sum_{\mu=1}^M \mathbb P\bigl(|\xi^\mu|_1<\theta\bigr)
		\le M N_f^{-a},
		\]
		which implies the first claim.
		If $M\le N_f^{a-\varepsilon}$, then $M N_f^{-a}\le N_f^{-\varepsilon}\to 0$ and the
		second claim follows.
	\end{proof}

\begin{proof}[Proof of Proposition \ref{prop:G-fixed-sparse}]
	Let
	\[
	E_C:=\{\|\eta^{(\alpha)}\|_1\le C\log N_f\},
	\]
	where $C>0$ is chosen as in Lemma~\ref{lem:cue-size-upper}. By that lemma,
	$\mathbb P(E_C^c)\xrightarrow[N_f\to\infty]{}0.$
	
	On the event $E_C$, Lemma~\ref{lem:false-positives} yields
	\[
	\max_{\mu\notin S_\alpha}\mathbb P(x_\mu=1\mid \eta^{(\alpha)})
	\le
	\overline q_+
	:=
	\exp\bigl(-c(\log N_f)^2\bigr)
	\]
	for some $c=c(\kappa,L,C)>0$ and all sufficiently large $N_f$.
	Since $M$ grows at most polynomially,
	\[
	(M-L)\overline q_+\xrightarrow[N_f\to\infty]{}0.
	\]	
	For the false negatives, note that for every $\mu\in S_\alpha$ one has
	$O_\mu(\eta^{(\alpha)})=|\xi^\mu|_1$,
	so that
	$q_-^{\mathrm{true}}
	=\mathbb P(|\xi^\mu|_1<\theta)$.
	By the same Chernoff estimate as in Lemma~\ref{lem:no-light-strokes},
	there exists $a=a(\kappa)>0$ such that
	\[
	q_-^{\mathrm{true}}\le N_f^{-a}
	\]
	for all sufficiently large $N_f$. Since $m=\lfloor \delta L\rfloor+1$ is fixed, it follows that
	\[
	\binom{L}{m}(q_-^{\mathrm{true}})^m\xrightarrow[N_f\to\infty]{}0.
	\]
	
	Applying Proposition~\ref{lem:G-fixed} on the event $E_C$ therefore gives
	\[
	\mathbb P\bigl(\mathcal G_\alpha(\delta,\rho)^c \mid E_C\bigr)\xrightarrow[N_f\to\infty]{}0.
	\]
	Hence
	\[
	\mathbb P\bigl(\mathcal G_\alpha(\delta,\rho)^c\bigr)
	\le
	\mathbb P\bigl(\mathcal G_\alpha(\delta,\rho)^c \mid E_C\bigr)+\mathbb P(E_C^c)
	\xrightarrow[N_f\to\infty]{}0,
	\]
	which proves the claim.
\end{proof}

\subsection{Proofs for exact recovery}

In this subsection we will deal with the proofs of the statements given in Subsection \ref{subsec:exact_recovery}.

\begin{proof}[Proof of Proposition~\ref{prop:strict-exact}]
	Work on the event from Lemma~\ref{lem:no-light-strokes}, which holds with probability
	$1-o(1)$ under the stated growth condition on $M$.
	
	Fix a concept $\alpha$ and consider the clean cue $\eta^{(\alpha)}$.
	If $\mu\in S_\alpha$, then 
	$O_\mu(\eta^{(\alpha)})=
	|\xi^\mu|_1\ge \theta$,
	so $x_\mu(\eta^{(\alpha)})=1$.
	If $\mu\notin S_\alpha$, Lemma~\ref{lem:false-positives} yields
	\[
	q_+ := \mathbb P(x_\mu=1\mid \eta^{(\alpha)})
	\le \exp\bigl(-c(\log N_f)^2\bigr).
	\]
	A union bound over $\mu\notin S_\alpha$ gives
	\[
	\mathbb P\bigl(\exists\,\mu\notin S_\alpha:\ x_\mu(\eta^{(\alpha)})=1\bigr)
	\le (M-L)q_+ \xrightarrow[N_f\to\infty]{}0
	\]
	for any polynomially growing $M$.
	Combining both parts yields exact recovery with probability $1-o(1)$.
\end{proof}

The following proof follows the same ideas: 

\begin{proof}[Proof of Proposition \ref{prop:strict-exact-used}]
	The proof is the same as that of Proposition~\ref{prop:strict-exact}, except that exact recovery is now required only on the set of used strokes. Thus the false-negative part is unchanged, since the target strokes are still precisely the elements of $S_\alpha$.
	
	For the false-positive part, however, we no longer need to exclude activations on all strokes in $\{1,\dots,M\}\setminus S_\alpha$, but only on the used strokes outside $S_\alpha$. Accordingly, the union bound is taken over $U\setminus S_\alpha$ rather than over $\{1,\dots,M\}\setminus S_\alpha$, where $U$ denotes the set of used strokes. Therefore the factor $M-L$ in the proof of Proposition~\ref{prop:strict-exact} is replaced by $|U|-L$.
	Hence the same argument yields
	\[
	\mathbb P\bigl(\text{restricted exact recovery fails}\bigr)
	\le
	L N_f^{-a}
	+
	\mathbb P\bigl(\|\eta^{(\alpha)}\|_1>C\log N_f\bigr)
	+
	(|U|-L)e^{-c(\log N_f)^2},
	\]
	for suitable constants $a>0$, $C>0$, and $c>0$. Under the stated assumptions, the right-hand side tends to zero, proving the claim.
\end{proof}

\section{Concepts with Variable Sizes}
\label{sec:var}

In Section~\ref{sec:fixed} we analysed hierarchical retrieval under the simplifying
assumption that every concept consists of exactly $L$ strokes. We now relax this
assumption and allow the concept sizes to be random. The main message remains the
same: successful concept retrieval does not require exact recovery of all strokes
at the intermediate layer. Rather, it is enough to control aggregate false
negatives and false positives, provided competing concepts are sufficiently well
separated.

The main new feature is that concept sizes now vary across $\alpha$. As a
result, the deterministic separation condition must take the size variability
into account. We therefore begin with a penalised score that compensates for
different concept sizes, establish a deterministic retrieval criterion on a size
window, and then combine it with overlap bounds and a control of the good event.
We conclude with a brief discussion of an alternative normalised score.

\subsection{Setup and notation}

Let $(L_\alpha)_{\alpha=1,\dots,P}$ be i.i.d.\ positive integer-valued random
variables. Conditionally on $L_\alpha=\ell$, the concept
\[
S_\alpha \subset \{1,\dots,M\}
\]
is chosen uniformly among all $\ell$-subsets, independently over $\alpha$.
In particular,
\[
|S_\alpha|=L_\alpha.
\]

For a fixed target concept $\alpha$, we again consider the clean cue
$\eta^{(\alpha)}$ defined in \eqref{eq:concept-or}, and the corresponding
stroke-layer output
\[
x=x(\eta^{(\alpha)}).
\]

As in Section~\ref{sec:fixed}, we define the numbers of false negatives and false
positives by
\[
F_-^{(\alpha)}
:=
\sum_{\mu\in S_\alpha}\mathbf 1_{\{x_\mu=0\}},
\qquad
F_+^{(\alpha)}
:=
\sum_{\mu\notin S_\alpha}\mathbf 1_{\{x_\mu=1\}}.
\]
For parameters $\delta\in(0,1)$ and $\rho\in\mathbb N_0$, we introduce the good
event
\[
\mathcal G_\alpha(\delta,\rho)
=
\Bigl\{
F_-^{(\alpha)}\le \delta L_\alpha
\Bigr\}
\cap
\Bigl\{
F_+^{(\alpha)}\le \rho
\Bigr\}.
\]
On this event, at least a fraction $1-\delta$ of the true strokes is active,
while at most $\rho$ false positives occur.

To account for varying concept sizes, we work primarily with the penalised score
\[
\mathrm{score}^{\mathrm{pen}}_\beta(x)
=
a\,|S_\beta\cap \supp(x)|
-
b\,L_\beta,
\qquad a,b>0,
\]
and decode according to
\[
\widehat\alpha(x)
=
\arg\max_{\beta\le P}\mathrm{score}^{\mathrm{pen}}_\beta(x).
\]
A normalised alternative score will be discussed in
Section~\ref{subsec:alternative-score}.

\subsection{Robust concept retrieval with a penalised score}
As in the fixed-size setting, the analysis begins with a deterministic
separation statement. The difference is that, because the concept sizes are now
variable, the separation threshold depends both on the target size and on the
size of the competitor.

The next lemma provides a deterministic sufficient condition for correct
decoding under the penalised score.

\begin{lemma}
	\label{lem:penalised-separation-randomL}
	
	Fix $\alpha$ and suppose that $\mathcal G_\alpha(\delta,\rho)$ holds.
	Then 
	\[
	\mathrm{score}^{\mathrm{pen}}_\alpha(x)
	\ge
	a(1-\delta)L_\alpha-bL_\alpha,
	\]
	and for every $\beta\neq\alpha$,
	\[
	\mathrm{score}^{\mathrm{pen}}_\beta(x)
	\le
	a\bigl(|S_\alpha\cap S_\beta|+\rho\bigr)-bL_\beta.
	\]
	
	Consequently,
	\[
	\mathrm{score}^{\mathrm{pen}}_\alpha(x)
	-
	\mathrm{score}^{\mathrm{pen}}_\beta(x)
	\ge
	a\bigl(
	(1-\delta)L_\alpha
	-
	|S_\alpha\cap S_\beta|
	-
	\rho	
	\bigr)
	-
	b(L_\alpha-L_\beta).
	\]
	
	In particular, if
	\[
	|S_\alpha\cap S_\beta|
	<
	(1-\delta)L_\alpha-\rho-\frac{b}{a}(L_\alpha-L_\beta),
	\]
	then
	\[
	\mathrm{score}^{\mathrm{pen}}_\alpha(x)
	>
	\mathrm{score}^{\mathrm{pen}}_\beta(x).
	\]
	
\end{lemma}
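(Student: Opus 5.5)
The argument mirrors the proof of Proposition~\ref{lem:score-separation}: it is purely deterministic, so we only need to unpack the definitions of $F_\pm^{(\alpha)}$ on the event $\mathcal G_\alpha(\delta,\rho)$. No randomness of the sizes $L_\beta$ enters; they are simply treated as fixed numbers.

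\emph{Lower bound for the target.} We write $|S_\alpha\cap\supp(x)|=\sum_{\mu\in S_\alpha}x_\mu=L_\alpha-F_-^{(\alpha)}$. On $\mathcal G_\alpha(\delta,\rho)$ we have $F_-^{(\alpha)}\le\delta L_\alpha$, so this quantity is at least $(1-\delta)L_\alpha$. Since $a>0$, multiplying by $a$ and subtracting $bL_\alpha$ gives the first inequality.

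\emph{Upper bound for a competitor.} For $\beta\neq\alpha$ we split
\[
|S_\beta\cap\supp(x)|=\sum_{\mu\in S_\beta\cap S_\alpha}x_\mu+\sum_{\mu\in S_\beta\setminus S_\alpha}x_\mu .
\]
The first sum is at most $|S_\alpha\cap S_\beta|$ because $x_\mu\le1$. The second sum is at most $\sum_{\mu\notin S_\alpha}x_\mu=F_+^{(\alpha)}\le\rho$. Multiplying by $a>0$ and subtracting $bL_\beta$ yields the second inequality.

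\emph{Conclusion.} Subtracting the two bounds gives
\[
\mathrm{score}^{\mathrm{pen}}_\alpha(x)-\mathrm{score}^{\mathrm{pen}}_\beta(x)\ge a\bigl((1-\delta)L_\alpha-|S_\alpha\cap S_\beta|-\rho\bigr)-b(L_\alpha-L_\beta).
\]
For the final claim, the assumed strict inequality on $|S_\alpha\cap S_\beta|$ can be rearranged as
\[
(1-\delta)L_\alpha-|S_\alpha\cap S_\beta|-\rho-\tfrac{b}{a}(L_\alpha-L_\beta)>0 .
\]
Multiplying by $a>0$ shows that the right-hand side of the difference bound is strictly positive, hence $\mathrm{score}^{\mathrm{pen}}_\alpha(x)>\mathrm{score}^{\mathrm{pen}}_\beta(x)$.

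There is no genuine obstacle here. The only points needing care are the use of $a>0$ to preserve the direction of the inequalities, and keeping the sign of the term $-b(L_\alpha-L_\beta)$ correct, since it may be of either sign depending on whether $L_\alpha\ge L_\beta$.
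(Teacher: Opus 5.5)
Your proof is correct and follows the paper's argument essentially verbatim. The lower bound comes from $|S_\alpha\cap\supp(x)|=L_\alpha-F_-^{(\alpha)}\ge(1-\delta)L_\alpha$, and the upper bound from splitting $S_\beta\cap\supp(x)$ into the true overlap with $S_\alpha$ and false positives outside $S_\alpha$; you additionally spell out the subtraction and the final rearrangement using $a>0$, which the paper leaves implicit.
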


\begin{proof}
	On $\mathcal G_\alpha(\delta,\rho)$ at most $\delta L_\alpha$ strokes in
	$S_\alpha$ are inactive. Hence
	\[
	|S_\alpha\cap \supp(x)|
	=
	\sum_{\mu\in S_\alpha}x_\mu
	\ge
	(1-\delta)L_\alpha.
	\]
	Therefore
	\[
	\mathrm{score}^{\mathrm{pen}}_\alpha(x)
	=
	a|S_\alpha\cap \supp(x)|-bL_\alpha
	\ge
	a(1-\delta)L_\alpha-bL_\alpha.
	\]
	
	For $\beta\neq\alpha$, every active stroke counted in
	$S_\beta\cap \supp(x)$ either belongs to the true overlap
	$S_\alpha\cap S_\beta$ or is a ``false positive'' outside $S_\alpha$.
	Therefore
	\[
	|S_\beta\cap \supp(x)|
	\le
	|S_\alpha\cap S_\beta|+F_+^{(\alpha)}
	\le
	|S_\alpha\cap S_\beta|+\rho.
	\]
	Substituting this into the definition of
	$\mathrm{score}^{\mathrm{pen}}_\beta(x)$ proves the claim.
\end{proof}

Lemma~\ref{lem:penalised-separation-randomL} gives a deterministic comparison between the target concept $\alpha$ and a single competitor $\beta$. In order to obtain a usable retrieval criterion, we now make this bound uniform over all competitors whose sizes lie in a prescribed interval. This leads naturally to a size window $[\ell,u]$, on which both the target size and the competitor size can be controlled simultaneously.

More precisely, to obtain a uniform separation condition on a size window, we restrict attention to
 \[
 \mathcal A_{\ell,u}
 =
 \{\alpha\in\{1,\dots,P\}:\ \ell\le L_\alpha\le u\}.
 \]
 If $\alpha,\beta\in\mathcal A_{\ell,u}$, then
 \[
 L_\alpha\ge \ell
 \qquad\text{and}\qquad
 L_\alpha-L_\beta\le u-\ell.
 \]
 Hence Lemma~\ref{lem:penalised-separation-randomL} suggests the deterministic margin
 \begin{equation}
 	\label{eq:tlu}
 	t_{\ell,u}
 	:=
 	\Bigl\lfloor
 	(1-\delta)\ell-\rho-\frac{b}{a}(u-\ell)
 	\Bigr\rfloor.
 \end{equation}
 Indeed, if $\alpha,\beta\in\mathcal A_{\ell,u}$ and $t_{\ell,u}\ge0$, then on the event
 $\mathcal G_\alpha(\delta,\rho)$ one has
 \[
 \mathrm{score}^{\mathrm{pen}}_\alpha(x)>
 \mathrm{score}^{\mathrm{pen}}_\beta(x)
 \]
 whenever
 \[
 |S_\alpha\cap S_\beta|\le t_{\ell,u}.
 \]

Hence, on the event $\mathcal G_\alpha(\delta,\rho)$, correct decoding within the
window reduces to excluding competitors in $\mathcal A_{\ell,u}$ whose overlap
with the target exceeds $t_{\ell,u}$.

To turn the deterministic separation criterion into a probabilistic retrieval
statement, it remains to control accidental overlaps between competing concepts.
Conditionally on the concept sizes, this is again a simple combinatorial question.

\begin{lemma}
	\label{lem:overlap-random-sizes}
	
	For every $t\ge0$,
	\[
	\mathbb P\bigl(
	|S_\alpha\cap S_\beta|\ge t+1
	\mid
	L_\alpha=\ell_\alpha,\,
	L_\beta=\ell_\beta
	\bigr)
	\le
	\binom{\ell_\alpha}{t+1}
	\Bigl(\frac{\ell_\beta}{M-t}\Bigr)^{t+1}.
	\]
	
\end{lemma}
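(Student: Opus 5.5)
We mimic the proof of Lemma~\ref{lem:overlap}, now conditioning on the sizes. Fix $\ell_\alpha,\ell_\beta$ and work conditionally on $\{L_\alpha=\ell_\alpha, L_\beta=\ell_\beta\}$. By the model assumptions, $S_\alpha$ and $S_\beta$ are then independent, with $S_\beta$ uniform among the $\ell_\beta$-subsets of $\{1,\dots,M\}$. If $t+1>\ell_\alpha$ or $t+1>\ell_\beta$, the left-hand side vanishes and there is nothing to prove. Assume therefore $t+1\le\min\{\ell_\alpha,\ell_\beta\}$, so in particular $M-t>0$.

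The first step is the inclusion
\[
\{|S_\alpha\cap S_\beta|\ge t+1\}\subseteq\bigcup_{T\subseteq S_\alpha,\ |T|=t+1}\{T\subseteq S_\beta\}.
\]
We then condition additionally on $S_\alpha$, which is independent of $S_\beta$ given the sizes. A union bound over the $\binom{\ell_\alpha}{t+1}$ choices of $T$ gives
\[
\mathbb P(\,\cdot\mid\ldots)\le\binom{\ell_\alpha}{t+1}\,\mathbb P(T\subseteq S_\beta)
\]
for a fixed $(t+1)$-set $T$. This bound is uniform in $S_\alpha$, so averaging over $S_\alpha$ preserves it.

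Next we compute the inclusion probability exactly, as in Lemma~\ref{lem:overlap}:
\[
\mathbb P(T\subseteq S_\beta)=\frac{\binom{M-t-1}{\ell_\beta-t-1}}{\binom{M}{\ell_\beta}}=\prod_{k=0}^{t}\frac{\ell_\beta-k}{M-k}.
\]
Each factor satisfies $\frac{\ell_\beta-k}{M-k}\le\frac{\ell_\beta}{M-t}$, because the numerator is at most $\ell_\beta$ and $M-k\ge M-t$ for $k\le t$. Hence the product is at most $\bigl(\ell_\beta/(M-t)\bigr)^{t+1}$, and the claim follows. (One could even obtain $(\ell_\beta/M)^{t+1}$, using $\frac{\ell_\beta-k}{M-k}\le\frac{\ell_\beta}{M}$ whenever $\ell_\beta\le M$, but the stated bound suffices.)

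None of these steps is really an obstacle. The only point requiring care is the conditioning: we must use that, given the sizes, $S_\beta$ is uniform and independent of $S_\alpha$. This holds because the pairs $(L_\gamma,S_\gamma)$ are independent across $\gamma$.
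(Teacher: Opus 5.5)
Your proposal is correct and follows the paper's proof essentially verbatim: you condition on the sizes, take a union bound over the $\binom{\ell_\alpha}{t+1}$ subsets $T\subseteq S_\alpha$, and bound $\mathbb P(T\subseteq S_\beta)=\prod_{k=0}^{t}\frac{\ell_\beta-k}{M-k}$ factor by factor by $\ell_\beta/(M-t)$. The extra care you take is also sound: you condition on $S_\alpha$ using independence given the sizes, and you dispose of the trivial case $t+1>\min\{\ell_\alpha,\ell_\beta\}$, both of which the paper leaves implicit; your remark that $(\ell_\beta/M)^{t+1}$ also holds is correct.
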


\begin{proof}
	Henceforth, we work conditionally on $(L_\alpha,L_\beta)=(\ell_\alpha,\ell_\beta)$. 
	If $|S_\alpha\cap S_\beta|\ge t+1$, then some subset
	$T\subset S_\alpha$ with $|T|=t+1$ is contained in $S_\beta$.
	There are $\binom{\ell_\alpha}{t+1}$ such subsets, so the union bound gives
	\[
	\mathbb P\bigl(
	|S_\alpha\cap S_\beta|\ge t+1
	\mid
	L_\alpha=\ell_\alpha,\,
	L_\beta=\ell_\beta
	\bigr)
	\le
	\binom{\ell_\alpha}{t+1}
	\mathbb P(T\subset S_\beta\mid L_\beta=\ell_\beta),
	\]
	where $T$ is any fixed $(t+1)$-subset.
	Now
	\[
	\mathbb P(T\subset S_\beta\mid L_\beta=\ell_\beta)
	=
	\frac{\binom{M-(t+1)}{\ell_\beta-(t+1)}}{\binom{M}{\ell_\beta}}
	=
	\prod_{k=0}^{t}\frac{\ell_\beta-k}{M-k}
	\le
	\Bigl(\frac{\ell_\beta}{M-t}\Bigr)^{t+1},
	\]
	which proves the claim.
\end{proof}

We now combine stroke-layer control, deterministic separation, and the
overlap estimate. Recall $t_{\ell,u}$ from \eqref{eq:tlu}.

The following main result of this section identifies the three basic mechanisms governing retrieval in the variable-size setting: stroke-layer accuracy, concentration of the target size inside the chosen window, and geometric separation from competing concepts through small overlaps.
\begin{theorem}[Concept retrieval for variable sizes]
	\label{thm:random-size-capacity}
	
	Fix $\delta\in(0,1)$, $\rho\in\mathbb N_0$, and $a,b>0$. Let $1\le \ell\le u$  and 
	assume $t_{\ell,u}\ge0$. For a size window $[\ell,u]$, define the restricted decoder
	\[
	\widehat\alpha_{\ell,u}(x)
	:=
	\arg\max_{\beta\in\mathcal A_{\ell,u}}
	\mathrm{score}^{\mathrm{pen}}_\beta(x).
	\]
	Then for every concept $\alpha$,
	\[
	\mathbb P\bigl(\widehat\alpha_{\ell,u}(x)\neq\alpha\bigr)
	\le
	\mathbb P\bigl(\mathcal G_\alpha(\delta,\rho)^c\bigr)
	+
	\mathbb P\bigl(L_\alpha\notin[\ell,u]\bigr)
	+
	\mathbb P\Bigl(
	\max_{\beta\in\mathcal A_{\ell,u}\setminus\{\alpha\}}
	|S_\alpha\cap S_\beta|
	\ge t_{\ell,u}+1
	\Bigr). 
	\]
    Consequently, by Lemma \ref{lem:overlap-random-sizes}, we have that
	\[\begin{split}
	\mathbb P\bigl(\widehat\alpha_{\ell,u}(x)\neq\alpha,\ \alpha\in\mathcal A_{\ell,u}\bigr)
	\le
	&~\mathbb P\bigl(\mathcal G_\alpha(\delta,\rho)^c,\ \alpha\in\mathcal A_{\ell,u}\bigr)
	+
	\mathbb P(L_\alpha\notin[\ell,u])
    \\
	&{}\quad+
	(P-1)\binom{u}{t_{\ell,u}+1}
	\Bigl(\frac{u}{M-t_{\ell,u}}\Bigr)^{t_{\ell,u}+1}.
	\end{split}
    \]
\end{theorem}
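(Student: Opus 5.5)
The plan mirrors the proof of Theorem~\ref{thm:retrieve-concepts}: a deterministic inclusion of events followed by union bounds. First, I will show that the failure event of the restricted decoder is contained in
\[
\mathcal G_\alpha(\delta,\rho)^c\ \cup\ \{L_\alpha\notin[\ell,u]\}\ \cup\ \Bigl\{\max_{\beta\in\mathcal A_{\ell,u}\setminus\{\alpha\}}|S_\alpha\cap S_\beta|\ge t_{\ell,u}+1\Bigr\}.
\]
Suppose instead that $\mathcal G_\alpha(\delta,\rho)$ holds, that $L_\alpha\in[\ell,u]$ (so $\alpha\in\mathcal A_{\ell,u}$ is a candidate), and that every competitor $\beta\in\mathcal A_{\ell,u}\setminus\{\alpha\}$ has overlap at most $t_{\ell,u}$. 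For such $\beta$ we have $L_\alpha\ge\ell$ and $L_\alpha-L_\beta\le u-\ell$. Since $b/a>0$, this gives
\[
(1-\delta)L_\alpha-\rho-\tfrac ba(L_\alpha-L_\beta)\ \ge\ (1-\delta)\ell-\rho-\tfrac ba(u-\ell)\ \ge\ t_{\ell,u},
\]
using $\lfloor y\rfloor\le y$. For the strict inequality required by Lemma~\ref{lem:penalised-separation-randomL}, I will use integrality: $|S_\alpha\cap S_\beta|\le t_{\ell,u}$ and $t_{\ell,u}=\lfloor y\rfloor$. The step needing care is the case where $y$ is an integer, so that $t_{\ell,u}=y$ and the inequality could be an equality. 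I would handle it by inspecting the lower bound $a((1-\delta)L_\alpha-|S_\alpha\cap S_\beta|-\rho)-b(L_\alpha-L_\beta)$ directly and noting that equality is only a boundary case. If strictness genuinely fails there, I would adopt the convention that $t_{\ell,u}$ is the largest integer strictly below $y$, as the paper's remark after \eqref{eq:tlu} implicitly asserts. Lemma~\ref{lem:penalised-separation-randomL} then gives $\mathrm{score}^{\mathrm{pen}}_\alpha>\mathrm{score}^{\mathrm{pen}}_\beta$ for all such $\beta$, so the argmax over $\mathcal A_{\ell,u}$ is $\alpha$ regardless of tie-breaking. A union bound then yields the first displayed inequality.

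For the second inequality, I will intersect the whole inclusion with $\{\alpha\in\mathcal A_{\ell,u}\}$. The first term becomes $\mathbb P(\mathcal G_\alpha^c,\ \alpha\in\mathcal A_{\ell,u})$, and the second term is kept as stated, since it is just an upper bound. For the overlap term, I will bound it by $\mathbb P(\exists\beta\ne\alpha:\ L_\alpha,L_\beta\in[\ell,u],\ |S_\alpha\cap S_\beta|\ge t_{\ell,u}+1)$, apply a union bound over the $P-1$ indices $\beta$, and condition on $(L_\alpha,L_\beta)=(\ell_\alpha,\ell_\beta)\in[\ell,u]^2$. Lemma~\ref{lem:overlap-random-sizes} bounds each conditional probability by $\binom{\ell_\alpha}{t+1}(\ell_\beta/(M-t))^{t+1}$ with $t=t_{\ell,u}$. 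This is monotone in $\ell_\alpha$ and $\ell_\beta$, so it is at most $\binom{u}{t+1}(u/(M-t))^{t+1}$. Averaging over the sizes, whose window probability is at most $1$, gives the claimed term.

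A subtle point is that $\mathcal A_{\ell,u}$ is random, so the maximum runs over a random index set. Writing the event as a union over all $\beta\ne\alpha$ with the indicator $\{L_\beta\in[\ell,u]\}$ avoids this issue, because the $S_\beta$ are conditionally uniform given the sizes and independent across concepts. I expect the main obstacle to be the strict-versus-floor issue at the margin, not the probabilistic estimates.
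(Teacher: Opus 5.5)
Your argument is the paper's own: the same three-event inclusion justified by Lemma~\ref{lem:penalised-separation-randomL}, followed by a union bound over $\beta\neq\alpha$ and Lemma~\ref{lem:overlap-random-sizes}. Your intersection with $\{\alpha\in\mathcal A_{\ell,u}\}$, the indicator on $\{L_\beta\in[\ell,u]\}$, and the monotonicity in $(\ell_\alpha,\ell_\beta)$ are exactly the details the paper leaves implicit when it passes to $\binom{u}{t_{\ell,u}+1}\bigl(u/(M-t_{\ell,u})\bigr)^{t_{\ell,u}+1}$.

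The paper uses the lemma's strict inequality without comment. So the step you leave open should be settled rather than called a boundary case. Set $y:=(1-\delta)\ell-\rho-\tfrac ba(u-\ell)$. For $L_\alpha,L_\beta\in[\ell,u]$,
\[
(1-\delta)L_\alpha-\rho-\tfrac ba(L_\alpha-L_\beta)-y
=(1-\delta)(L_\alpha-\ell)+\tfrac ba(u-L_\alpha)+\tfrac ba(L_\beta-\ell)
\ \ge\ \min\bigl\{1-\delta,\tfrac ba\bigr\}\,(u-\ell).
\]
Hence for $u>\ell$ the lemma's hypothesis holds strictly whenever $|S_\alpha\cap S_\beta|\le\lfloor y\rfloor$. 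The theorem is then proved as written, with no change of convention.

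For $u=\ell$ the difference vanishes, so strictness needs $\lfloor y\rfloor<y$. If $y=(1-\delta)\ell-\rho\in\mathbb Z$, it genuinely fails. Take $\ell=u=2$, $\delta=\tfrac12$, $\rho=0$, so $t_{2,2}=1$. Let $S_\alpha=\{1,2\}$, $S_\beta=\{2,3\}$, $x_1=0$, $x_2=1$, with no false positives. Then $\mathcal G_\alpha(\delta,\rho)$ holds and the overlap equals $t_{2,2}$, but the two penalised scores tie, so an unfavourable tie-break returns $\beta$.

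This also breaks the probability bound itself, not just the inclusion. Take $L_\alpha\equiv 2$ in the sparse stroke model, tie-breaking towards smaller indices, and $\alpha=P$. Choose $M,P$ polynomial in $N_f$ with $P/M\to\infty$ and $P/M^2=o(q_-^{\mathrm{true}})$. Then the failure configuration has probability of order $q_-^{\mathrm{true}}$, while the stated right-hand side is $o(q_-^{\mathrm{true}})$.

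So in this degenerate case the inclusion used by both you and the paper fails. Your fallback $t_{\ell,u}:=\lceil y\rceil-1$, which differs from $\lfloor y\rfloor$ only when $y\in\mathbb Z$, is therefore forced rather than optional.
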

\begin{proof}
	The error event is contained in the union of the following three events:
	\begin{enumerate}
		\item $\mathcal G_\alpha(\delta,\rho)^c$,
		\item $L_\alpha\notin[\ell,u]$,
		\item 
		$
		\max_{\beta \in \mathcal A_{\ell ,u }\setminus \{\alpha\}}		|S_\alpha\cap S_\beta|\ge t_{\ell,u}+1.
		$
	\end{enumerate}
	
	Indeed, if none of these events occurs, then
	$\mathcal G_\alpha(\delta,\rho)$ holds, we have $L_\alpha\in[\ell,u]$, and
	for every $\beta\neq\alpha$ with $L_\beta\in[\ell,u]$,
	\[
	|S_\alpha\cap S_\beta|\le t_{\ell,u}.
	\]
	By Lemma~\ref{lem:penalised-separation-randomL}, this implies
	\[
	\mathrm{score}^{\mathrm{pen}}_\alpha(x)>
	\mathrm{score}^{\mathrm{pen}}_\beta(x)
	\qquad\text{for all }\beta\neq\alpha\text{ with }\beta\in\mathcal A_{\ell,u}.
	\]
	Therefore $\widehat\alpha_{\ell,u}(x)=\alpha.$
	
	The third event is bounded by Lemma~\ref{lem:overlap-random-sizes}
	together with the union bound over $\beta\neq\alpha$.
\end{proof}

 The next corollaries show how this abstract bound translates into simple asymptotic retrieval criteria for bounded and slowly growing size windows.

For an $M$-dependent size window $[\ell_M,u_M]$, let
\[
t_M:=t_{\ell_M,u_M}
=
\Bigl\lfloor
(1-\delta)\ell_M-\rho-\frac{b}{a}(u_M-\ell_M)
\Bigr\rfloor.
\]

\begin{corollary}
	\label{cor:general-window}
	
	Assume that
	\[
	\max_{\alpha \le P}\mathbb P\bigl(\mathcal G_\alpha(\delta,\rho)^c\bigr)\to0
	\]
	that
	\[
	\mathbb P(L_\alpha\notin[\ell_M,u_M])\to 0,
	\]
	and that $t_M$ is such that
	\[
	P\,\binom{u_M}{t_M+1}
	\Bigl(\frac{u_M}{M-t_M}\Bigr)^{t_M+1}\to 0.
	\]
	Then the \emph{restricted decoder}
	\[
	\widehat\alpha_{\ell_M,u_M}(x)
	:=
	\arg\max_{\beta\in\mathcal A_{\ell_M,u_M}}
	\mathrm{score}^{\mathrm{pen}}_\beta(x) 
	\]
	satisfies 
	\[
	\max_{\alpha\le P}
	\mathbb P\bigl(
	\widehat\alpha_{\ell_M,u_M}(x)\neq\alpha,\ 
	\alpha\in\mathcal A_{\ell_M,u_M}
	\bigr)\to 0.
	\]
\end{corollary}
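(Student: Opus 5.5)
The plan is to apply the second (quantitative) bound of Theorem~\ref{thm:random-size-capacity} with the $M$-dependent window $[\ell,u]=[\ell_M,u_M]$, and then take the maximum over $\alpha\le P$. For $M$ large, $t_M\ge 0$ must hold; otherwise the overlap term cannot vanish, since $t_M<0$ gives $t_M+1\le 0$ and the term is at least of order $P\ge1$. So I will first note that the third hypothesis forces $t_M\ge0$ eventually. The intended reading is that $t_M\ge0$ is part of the standing assumption; if $t_M+1=0$ the expression is $P$, which does not tend to zero. Then Theorem~\ref{thm:random-size-capacity} is applicable for all large $M$.

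For each fixed $\alpha$, the theorem gives
\[
\mathbb P\bigl(\widehat\alpha_{\ell_M,u_M}(x)\neq\alpha,\ \alpha\in\mathcal A_{\ell_M,u_M}\bigr)
\le \mathbb P\bigl(\mathcal G_\alpha(\delta,\rho)^c\bigr)+\mathbb P(L_\alpha\notin[\ell_M,u_M])
+(P-1)\binom{u_M}{t_M+1}\Bigl(\frac{u_M}{M-t_M}\Bigr)^{t_M+1}.
\]
Here we use $\mathbb P(\mathcal G_\alpha^c,\ \alpha\in\mathcal A)\le\mathbb P(\mathcal G_\alpha^c)$. The three summands are handled as follows.

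The first summand is bounded by $\max_{\alpha\le P}\mathbb P(\mathcal G_\alpha(\delta,\rho)^c)$, which tends to zero by assumption. The second summand does not depend on $\alpha$, because the $L_\alpha$ are i.i.d.; hence its maximum over $\alpha$ equals $\mathbb P(L_1\notin[\ell_M,u_M])\to0$. The third summand is also independent of $\alpha$ and is bounded by $P\binom{u_M}{t_M+1}(u_M/(M-t_M))^{t_M+1}\to0$.

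The right-hand side is therefore a uniform bound, and taking $\max_{\alpha\le P}$ gives the claim. The only delicate point is checking that the overlap step in Theorem~\ref{thm:random-size-capacity} is valid for the sizes involved. Conditional on the sizes, Lemma~\ref{lem:overlap-random-sizes} is applied with $\ell_\alpha,\ell_\beta\le u_M$ and uses the monotonicity $\binom{\ell_\alpha}{t+1}\le\binom{u_M}{t+1}$. Restricting to competitors $\beta\in\mathcal A_{\ell_M,u_M}$ makes this legitimate, and the restriction to $\alpha\in\mathcal A_{\ell_M,u_M}$ on the left-hand side justifies it for $\alpha$. This bookkeeping is already done in the theorem, so the corollary follows directly.
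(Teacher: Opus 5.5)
Your proposal is correct and matches the paper's intended argument: the paper gives no separate proof, and the corollary is read off from the second bound of Theorem~\ref{thm:random-size-capacity} with the window $[\ell_M,u_M]$, each of the three terms vanishing uniformly in $\alpha$. One small correction to your aside: the overlap hypothesis only excludes $t_M=-1$, because for $t_M\le -2$ the usual convention gives $\binom{u_M}{t_M+1}=0$. So $t_M\ge 0$ cannot be derived from the hypotheses and has to be imposed as a standing assumption, which is what the paper does implicitly when it remarks that one must additionally ensure $t_M\ge0$.
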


The overlap contribution is controlled by
\[
P\,\binom{u_M}{t_M+1}
\Bigl(\frac{u_M}{M-t_M}\Bigr)^{t_M+1}.
\]
Thus the usefulness of Corollary~\ref{cor:general-window} depends on the
behaviour of the threshold $t_M$ associated with the chosen size window
$[\ell_M,u_M]$. In the special case where the threshold is eventually constant, say $t_M\equiv t$, the overlap bound is of order
\[
P\,u_M^{2(t+1)}M^{-(t+1)},
\]
up to constants. Hence in that regime, the bound is nontrivial whenever
$P$ grows slower than
\[
M^{t+1}/u_M^{2(t+1)}.
\]
More generally, when $t_M$ varies with $M$, one may use
Lemma~\ref{lem:simple-overlap-condition} below.

The simplest case is a bounded size window.

%
%
\begin{corollary}
	\label{cor:bounded-window}
Fix integers $1\le \ell\le u$, independent of $M$, and assume $t_{\ell,u}\ge0$.
Define
\[
\mathcal A_{\ell,u}
=
\{\alpha\le P:\ \ell\le L_\alpha\le u\}
\]
and
\[
\widehat\alpha_{\ell,u}(x)
=
\arg\max_{\beta\in\mathcal A_{\ell,u}}\mathrm{score}^{\mathrm{pen}}_\beta(x).
\]
If
	\[
	\max_{\alpha\le P}\mathbb P\bigl(\mathcal G_\alpha(\delta,\rho)^c\bigr)\to0,
	\qquad
	\mathbb P(L_\alpha\notin[\ell,u])\to0,
	\]
	and
	\[
	P=o(M^{t_{\ell,u}+1}),
	\]
	then again
	\[
	\max_{\alpha\le P}
	\mathbb P\bigl(
	\widehat\alpha_{\ell,u}(x)\neq\alpha,\ 
	\alpha\in\mathcal A_{\ell,u}
	\bigr)\to 0.
	\]
\end{corollary}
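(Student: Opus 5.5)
The plan is to derive Corollary~\ref{cor:bounded-window} as the special case of Corollary~\ref{cor:general-window} (or, more directly, of Theorem~\ref{thm:random-size-capacity}) with a constant window $[\ell_M,u_M]\equiv[\ell,u]$. Then $t_M\equiv t_{\ell,u}=:t$ is a fixed nonnegative integer independent of $M$, and the first two hypotheses of Corollary~\ref{cor:general-window} are exactly the first two assumptions here. It therefore only remains to verify the overlap condition
\[
P\binom{u}{t+1}\Bigl(\frac{u}{M-t}\Bigr)^{t+1}\to 0 .
\]

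For this, I would note that $\binom{u}{t+1}u^{t+1}$ is a constant depending only on $(\ell,u,\delta,\rho,a,b)$. Moreover, since $t$ is fixed and $M\to\infty$, we have $M-t\ge M/2$ for $M$ large, so the whole expression is bounded by $\mathrm{const}\cdot P\,M^{-(t+1)}$. This tends to zero precisely because $P=o(M^{t+1})$. The implicit convention that $M\to\infty$ along with $N_f$ should be stated here, since the $o(\cdot)$ assumption only makes sense in that regime.

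To make the uniformity over $\alpha$ explicit, I would apply the second display of Theorem~\ref{thm:random-size-capacity} to each $\alpha$ and take the maximum over $\alpha\le P$. The right-hand side is then bounded by
\[
\max_{\alpha}\mathbb P(\mathcal G_\alpha^c)+\mathbb P(L_\alpha\notin[\ell,u])+\mathrm{const}\cdot PM^{-(t+1)}.
\]
Here the middle term does not depend on $\alpha$, because the $L_\alpha$ are i.i.d. All three terms tend to zero, which gives the claim.

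There is no real obstacle in this argument. The only point needing care is that the union bound in Theorem~\ref{thm:random-size-capacity} is applied conditionally on the sizes with $L_\alpha,L_\beta\le u$, so that Lemma~\ref{lem:overlap-random-sizes} legitimately yields the factor $u$ in place of the random sizes.
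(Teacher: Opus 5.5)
Your proof is correct and follows the paper's own argument. With the window fixed, $t_{\ell,u}$ is a constant, so the overlap term $(P-1)\binom{u}{t_{\ell,u}+1}\bigl(\frac{u}{M-t_{\ell,u}}\bigr)^{t_{\ell,u}+1}$ from Theorem~\ref{thm:random-size-capacity} is $O(PM^{-(t_{\ell,u}+1)})$ and vanishes because $P=o(M^{t_{\ell,u}+1})$, while the other two terms vanish by hypothesis; your explicit treatment of the maximum over $\alpha$ and of $M-t\ge M/2$ only spells out details the paper leaves implicit.
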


\begin{proof}
	Since $\ell$ and $u$ are fixed,
	\[
	\binom{u}{t_{\ell,u}+1}
	\Bigl(\frac{u}{M-t_{\ell,u}}\Bigr)^{t_{\ell,u}+1}
	=
	O\bigl(M^{-(t_{\ell,u}+1)}\bigr),
	\]
	so the overlap term in Theorem~\ref{thm:random-size-capacity} tends to
	zero under the stated assumption.
\end{proof}

The following lemma gives a convenient sufficient condition for the
overlap term to vanish when the threshold $t_M$ is bounded below by a fixed integer.
	\begin{lemma}
		\label{lem:simple-overlap-condition}
		Assume that there exists a fixed integer $t_*\ge0$ such that
		\[
		t_M\ge t_*
		\]
		for all sufficiently large $M$, and that $u_M=o(M)$. Then
		$
		P\,\frac{u_M^{2(t_*+1)}}{M^{t_*+1}}\to0
		$
		implies
		\[
		P\,\binom{u_M}{t_M+1}
		\Bigl(\frac{u_M}{M-t_M}\Bigr)^{t_M+1}\to0.
		\]
	\end{lemma}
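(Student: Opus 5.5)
The idea is to bound the overlap term by a geometric-type expression in the exponent $k_M:=t_M+1$. Because the base of that expression tends to zero, a larger exponent only makes the term smaller. This lets us replace the possibly $M$-dependent exponent $t_M+1$ by the fixed exponent $t_*+1$, which is exactly the situation of the hypothesis.

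First, a trivial case. If $k_M>u_M$, then $\binom{u_M}{k_M}=0$ and there is nothing to prove. Otherwise we use $\binom{u_M}{k_M}\le u_M^{k_M}/k_M!\le u_M^{k_M}$, which gives
\[
P\binom{u_M}{t_M+1}\Bigl(\frac{u_M}{M-t_M}\Bigr)^{t_M+1}\le P\Bigl(\frac{u_M^2}{M-t_M}\Bigr)^{k_M}.
\]

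Next we control the denominator. By the definition of $t_M$ we have $t_M\le(1-\delta)\ell_M\le u_M$. Since $u_M=o(M)$, this gives $M-t_M\ge M/2$ for all large $M$. Hence the right-hand side above is at most $P\,x_M^{k_M}$ with
\[
x_M:=\frac{2u_M^2}{M}.
\]

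The main point is to show $x_M\to0$. Since $P\ge1$, the hypothesis $P\,u_M^{2(t_*+1)}/M^{t_*+1}\to0$ forces
\[
\bigl(u_M^2/M\bigr)^{t_*+1}\to0,
\]
so $u_M^2/M\to0$ and therefore $x_M\to0$. In particular $x_M\le1$ eventually. Since $k_M\ge t_*+1$ for large $M$, it follows that $x_M^{k_M}\le x_M^{t_*+1}$.

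Combining these steps, for all large $M$,
\[
P\binom{u_M}{t_M+1}\Bigl(\frac{u_M}{M-t_M}\Bigr)^{t_M+1}\le 2^{t_*+1}\,P\,\frac{u_M^{2(t_*+1)}}{M^{t_*+1}}\longrightarrow0,
\]
which is the claim.

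The only delicate step is the monotonicity in the exponent: we must know $x_M\le1$ before replacing $k_M$ by $t_*+1$. That is why we first extract $u_M^2/M\to0$ from the hypothesis using $P\ge1$. The bound $M-t_M\ge M/2$ similarly relies on $t_M\le u_M=o(M)$.
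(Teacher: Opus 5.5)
Your proof is correct and follows essentially the same route as the paper: you bound $\binom{u_M}{t_M+1}\le u_M^{t_M+1}$, use $t_M\le u_M=o(M)$ to get $M-t_M\ge M/2$, extract $u_M^2/M\to0$ from the hypothesis via $P\ge1$, and then lower the exponent from $t_M+1$ to $t_*+1$ once the base is at most $1$. Your explicit justification $t_M\le(1-\delta)\ell_M\le u_M$ is a small addition the paper leaves implicit, and your separate case $t_M+1>u_M$ is harmless but not needed, since $\binom{u_M}{k}\le u_M^{k}$ holds there as well.
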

	\begin{proof}
		Using the bound
		$
		\binom{u_M}{t_M+1}\le u_M^{t_M+1},
		$
		we obtain
		\[
		\binom{u_M}{t_M+1}
		\Bigl(\frac{u_M}{M-t_M}\Bigr)^{t_M+1}
		\le
		\Bigl(\frac{u_M^2}{M-t_M}\Bigr)^{t_M+1}.
		\]
		Since $u_M/M \to 0$, we have that $t_M<M/2$
		for all sufficiently large $M$. Because $t_M\le u_M$, it follows that
		$
		M-t_M\ge M-u_M\ge \frac M2.$
		Hence
		\[
		\frac{u_M^2}{M-t_M}\le 2\,\frac{u_M^2}{M}.
		\]  
		Therefore
		\[
		\binom{u_M}{t_M+1}
		\Bigl(\frac{u_M}{M-t_M}\Bigr)^{t_M+1}
		\le
		\Bigl(2\,\frac{u_M^2}{M}\Bigr)^{t_M+1}.
		\]
		Now, since
		$P\,\frac{u_M^{2(t_*+1)}}{M^{t_*+1}}\to 0$
		and $P\ge 1$, we also have
		$\frac{u_M^2}{M}\to 0$.
		Thus the base on the right-hand side is eventually less than $1$.
		Since $t_M\ge t_*$, we obtain
		\[
		\Bigl(2\,\frac{u_M^2}{M}\Bigr)^{t_M+1}
		\le
		\Bigl(2\,\frac{u_M^2}{M}\Bigr)^{t_*+1}
		=
		2^{t_*+1}\frac{u_M^{2(t_*+1)}}{M^{t_*+1}}.
		\]
		Multiplying by $P$ yields the claim.
	\end{proof}

As an illustration, we record a Poisson-type corollary.

\begin{corollary}
	\label{cor:poisson-window}
	
	Assume that $L_\alpha$ are i.i.d.\ with exponentially decaying tails,
	for instance Poisson or Poisson conditioned on $\{L_\alpha\ge1\}$.
	Since $L_\alpha$ has an exponentially decaying tail, there exist
	constants $c_1,c_2>0$ such that
	\[
	\mathbb P(L_\alpha>n)\le c_1 e^{-c_2 n}
	\]
	for all sufficiently large $n$. Hence, for $u_M=C\log M$,
	we have
	\[
	\mathbb P(L_\alpha>u_M)\le c_1 M^{-c_2 C}\to0.
	\]
	Suppose moreover that, for some fixed integer $t_*\ge0$,
	$
	t_M:=t_{\ell_*,u_M}\ge t_*
	$
	for all sufficiently large $M$, and that
	$
	\max_{\alpha\le P}\mathbb P\bigl(\mathcal G_\alpha(\delta,\rho)^c\bigr)\to0.
	$
	If
	\[
	P\,\frac{(\log M)^{2(t_*+1)}}{M^{t_*+1}}\to0,
	\]
	then
	\[
	\max_{\alpha\le P}\mathbb P(\widehat\alpha(x)\neq\alpha,\ 
	\alpha\in\mathcal A_{\ell,u})\to0.
	\]
	
\end{corollary}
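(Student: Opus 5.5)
The plan is to reduce Corollary~\ref{cor:poisson-window} to Corollary~\ref{cor:general-window} with the window $[\ell_M,u_M]=[\ell_*,C\log M]$, where $\ell_*$ is the fixed lower size used in defining $t_M=t_{\ell_*,u_M}$. Corollary~\ref{cor:general-window} has three hypotheses: the good-event control $\max_{\alpha\le P}\mathbb P(\mathcal G_\alpha(\delta,\rho)^c)\to0$, which is assumed directly; the window concentration $\mathbb P(L_\alpha\notin[\ell_M,u_M])\to0$; and the vanishing of the overlap term $P\binom{u_M}{t_M+1}\bigl(u_M/(M-t_M)\bigr)^{t_M+1}$. Once these are checked, the conclusion of Corollary~\ref{cor:general-window} is exactly the claimed statement, reading $\widehat\alpha$ as the restricted decoder $\widehat\alpha_{\ell_*,u_M}$ and $\mathcal A_{\ell,u}$ as $\mathcal A_{\ell_*,u_M}$.

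\textbf{Window concentration.} Split $\mathbb P(L_\alpha\notin[\ell_*,u_M])\le \mathbb P(L_\alpha<\ell_*)+\mathbb P(L_\alpha>u_M)$. The upper tail is already handled in the statement: the exponential tail bound with $n=u_M=C\log M$ gives $c_1M^{-c_2C}\to0$ for every $C>0$.

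The lower tail $\mathbb P(L_\alpha<\ell_*)$ does not depend on $M$. It vanishes only if $\ell_*=1$ and $L_\alpha\ge1$ almost surely, as in the conditioned-Poisson case. Otherwise it is a fixed positive constant, and I would handle it in one of two ways. One option is to keep it as an additive error term, as the first inequality of Theorem~\ref{thm:random-size-capacity} does. The better option is to notice that the statement only concerns the event $\{\alpha\in\mathcal A_{\ell,u}\}$. So I would invoke the second display of Theorem~\ref{thm:random-size-capacity} directly, bounding its term $\mathbb P(L_\alpha\notin[\ell,u])$ by the upper tail alone. This requires checking that the proof of that theorem only uses $L_\alpha\in[\ell,u]$ through the indicator $\{\alpha\in\mathcal A_{\ell,u}\}$ in the left-hand side, which it does. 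This interpretive point, namely which window the claim refers to and whether the lower-tail mass must vanish, is where I expect the only real subtlety. I would resolve it by stating the result on $\{\alpha\in\mathcal A_{\ell_*,u_M}\}$.

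\textbf{Overlap term.} Apply Lemma~\ref{lem:simple-overlap-condition}. Its hypotheses are $t_M\ge t_*$ eventually, which is assumed, and $u_M=C\log M=o(M)$, which is clear. It then suffices that
\[
P\,\frac{u_M^{2(t_*+1)}}{M^{t_*+1}}=C^{2(t_*+1)}\,P\,\frac{(\log M)^{2(t_*+1)}}{M^{t_*+1}}\to0,
\]
and this holds by assumption, since $C$ is a fixed constant. All three hypotheses of Corollary~\ref{cor:general-window} (in its restricted form) are then verified, and the maximum over $\alpha\le P$ tends to zero uniformly because every bound above is independent of $\alpha$.
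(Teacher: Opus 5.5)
Your proposal is correct and follows the route the paper leaves implicit in the statement. That route combines the exponential-tail estimate for $\mathbb P(L_\alpha>u_M)$, Lemma~\ref{lem:simple-overlap-condition} with $u_M=C\log M=o(M)$ to turn $P(\log M)^{2(t_*+1)}/M^{t_*+1}\to0$ into decay of the overlap term, and Theorem~\ref{thm:random-size-capacity} (via Corollary~\ref{cor:general-window}) on the window $[\ell_*,u_M]$ with the restricted decoder.

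Your treatment of the lower tail repairs a point the paper passes over: $\mathbb P(L_\alpha<\ell_*)$ does not vanish for plain Poisson sizes, so Corollary~\ref{cor:general-window} cannot be quoted verbatim. You should phrase this as dropping the term $\mathbb P(L_\alpha\notin[\ell_*,u_M])$ altogether on $\{\alpha\in\mathcal A_{\ell_*,u_M}\}$ (so even the upper-tail bound is not needed for the displayed conclusion), not as ``bounding it by the upper tail'', which read literally is an inequality in the wrong direction.
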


For the penalised score, one must always additionally ensure that $t_M\ge0$.
This is automatic for bounded windows, but it may fail for growing
windows if the penalty ratio $b/a$ is kept fixed. 

\subsection{Control of the good event}
It remains to verify the stroke-layer input in Theorem~\ref{thm:random-size-capacity},
namely that the good event $\mathcal G_\alpha(\delta,\rho)$ holds with high probability.
The following lemma provides a convenient conditional bound on this event over a
size window.

\begin{lemma}[Control of the good event]
	\label{lem:G-window}
	Fix $\delta\in(0,1)$ and $\rho\in\mathbb N_0$. For $\ell\in\mathbb N$ let
	$
	m_\ell:=\lfloor \delta \ell\rfloor+1
	$
	and define the envelope probabilities
	\begin{equation}
		\label{eq:qpm-window}
		q_-(\ell)
		:=
		\max_{\mu\in S_\alpha}
		\mathbb P(x_\mu=0\mid \eta^{(\alpha)},L_\alpha=\ell),
		\quad\text{and}\quad
		q_+(\ell)
		:=
		\max_{\mu\notin S_\alpha}
		\mathbb P(x_\mu=1\mid \eta^{(\alpha)},L_\alpha=\ell).
	\end{equation}
	
	Then for every $\ell$,
	\[
	\mathbb P\bigl(\mathcal G_\alpha(\delta,\rho)^c \mid L_\alpha=\ell\bigr)
	\le
	\binom{\ell}{m_\ell}\,q_-(\ell)^{m_\ell}
	+
	\frac{\bigl((M-\ell)q_+(\ell)\bigr)^{\rho+1}}{(\rho+1)!}.
	\]
	Consequently, if for some window $[\ell_M,u_M]$,
	\begin{equation}\label{eq:sup1}
		\max_{\ell\in[\ell_M,u_M]}
		\binom{\ell}{m_\ell}\,q_-(\ell)^{m_\ell}\to0,
	\end{equation}
	and
	\begin{equation}\label{eq:sup2}
		\max_{\ell\in[\ell_M,u_M]}
		(M-\ell)q_+(\ell)\to0,
	\end{equation}
	then
	\[
	\max_{\ell\in[\ell_M,u_M]}
	\mathbb P\bigl(\mathcal G_\alpha(\delta,\rho)^c \mid L_\alpha=\ell\bigr)\to0.
	\]
	Moreover,
	\[
	\mathbb P\bigl(\mathcal G_\alpha(\delta,\rho)^c\bigr)
	\le
	\mathbb P(L_\alpha\notin[\ell_M,u_M])
	+
	\max_{\ell\in[\ell_M,u_M]}
	\mathbb P\bigl(\mathcal G_\alpha(\delta,\rho)^c \mid L_\alpha=\ell\bigr).
	\]
	Therefore, $\mathbb P\bigl(\mathcal G_\alpha(\delta,\rho)^c\bigr)\to 0$ whenever \eqref{eq:sup1}, \eqref{eq:sup2} and
	\[
	\mathbb P(L_\alpha\notin[\ell_M,u_M])\to 0
	\]
	all hold. 
\end{lemma}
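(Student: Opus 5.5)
The plan is to repeat the proof of Proposition~\ref{lem:G-fixed}, now conditionally on $L_\alpha=\ell$, and then to average over the size window. Fix $\ell$ and condition on $L_\alpha=\ell$. Then $|S_\alpha|=\ell$, and on this conditional space $\mathcal G_\alpha(\delta,\rho)^c\subseteq\{F_-^{(\alpha)}\ge m_\ell\}\cup\{F_+^{(\alpha)}\ge\rho+1\}$. Here we use that $F_-^{(\alpha)}$ is integer valued, so $F_-^{(\alpha)}>\delta\ell$ is equivalent to $F_-^{(\alpha)}\ge\lfloor\delta\ell\rfloor+1=m_\ell$. The two terms are then bounded separately by a union bound.

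\emph{False negatives.} For $\mu\in S_\alpha$ we have $O_\mu(\eta^{(\alpha)})=|\xi^\mu|_1$, exactly as in the proof of Proposition~\ref{lem:G-fixed}. Hence $x_\mu=0$ iff $|\xi^\mu|_1<\theta$. Conditionally on $L_\alpha=\ell$ (and on $S_\alpha$, which is independent of the strokes), the indicators $\mathbf 1\{x_\mu=0\}$, $\mu\in S_\alpha$, are independent because they are functions of distinct i.i.d.\ strokes. Each has probability at most $q_-(\ell)$: the conditional probability given $\eta^{(\alpha)}$ is bounded by $q_-(\ell)$, and averaging preserves this bound. A union bound over the $m_\ell$-subsets of $S_\alpha$ therefore gives $\mathbb P(F_-^{(\alpha)}\ge m_\ell\mid L_\alpha=\ell)\le\binom{\ell}{m_\ell}q_-(\ell)^{m_\ell}$.

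\emph{False positives.} Conditionally on $\eta^{(\alpha)}$ and $L_\alpha=\ell$, the variables $x_\mu$ with $\mu\notin S_\alpha$ depend only on the strokes outside $S_\alpha$. These strokes are independent of $\eta^{(\alpha)}$ and of each other, so the $x_\mu$ are independent Bernoulli variables with parameters at most $q_+(\ell)$. Thus $F_+^{(\alpha)}$ is stochastically dominated by $\mathrm{Bin}(M-\ell,q_+(\ell))$. The falling-factorial moment bound from the proof of Proposition~\ref{lem:G-fixed} then gives $\mathbb P(F_+^{(\alpha)}>\rho\mid\eta^{(\alpha)},L_\alpha=\ell)\le((M-\ell)q_+(\ell))^{\rho+1}/(\rho+1)!$. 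Taking conditional expectations yields the bound given only $L_\alpha=\ell$.

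\emph{Main obstacle.} The delicate point is that $q_\pm(\ell)$ is defined conditionally on $\eta^{(\alpha)}$, so it is a priori random. I will read it as a deterministic envelope (an almost-sure upper bound), as in Proposition~\ref{lem:G-fixed}. If it is random, I would instead take the expectation of the conditional bound. For the false-positive term this requires the binomial bound to hold pointwise before averaging, which is exactly what the conditional argument provides.

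The remaining claims are routine. If \eqref{eq:sup1} and \eqref{eq:sup2} hold, then both terms tend to zero uniformly over $\ell\in[\ell_M,u_M]$; for the second term this uses that $x^{\rho+1}$ is increasing and tends to $0$ as $x\to0$. Finally, by the law of total probability,
\[
\mathbb P(\mathcal G_\alpha^c)=\sum_\ell\mathbb P(L_\alpha=\ell)\,\mathbb P(\mathcal G_\alpha^c\mid L_\alpha=\ell).
\]
Bounding the conditional probability by $1$ for $\ell\notin[\ell_M,u_M]$ and by the window maximum otherwise gives the stated decomposition, and the final convergence claim follows.
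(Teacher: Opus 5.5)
Your proof is correct and follows the paper's route: you split $\mathcal G_\alpha(\delta,\rho)^c$ by a union bound, control false negatives by a union bound over $m_\ell$-subsets of $S_\alpha$, and control false positives by the falling-factorial binomial estimate, as in Proposition~\ref{lem:G-fixed}. You are more explicit than the paper, which simply refers back to the fixed-size case: you derive the false-negative product bound from independence of the strokes in $S_\alpha$ given $L_\alpha=\ell$ and $S_\alpha$ (not given $\eta^{(\alpha)}$, under which they are coupled through their union), you read $q_\pm(\ell)$ as deterministic envelopes, and you spell out the law-of-total-probability step behind the window decomposition.
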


\begin{proof}
	Since
	\[
	\mathcal G_\alpha(\delta,\rho)^c
	=
	\{F_-^{(\alpha)}>\delta L_\alpha\}
	\cup
	\{F_+^{(\alpha)}>\rho\},
	\]
	the union bound gives
	\[
	\mathbb P\bigl(\mathcal G_\alpha(\delta,\rho)^c \mid L_\alpha=\ell\bigr)
	\le
	\mathbb P\bigl(F_-^{(\alpha)}>\delta \ell \mid L_\alpha=\ell\bigr)
	+
	\mathbb P\bigl(F_+^{(\alpha)}>\rho \mid L_\alpha=\ell\bigr).
	\]
	The first term is bounded by
	\[
	\binom{\ell}{m_\ell}\,q_-(\ell)^{m_\ell},
	\]
	exactly as in the fixed-size case, by a union bound over all
	$m_\ell$-subsets of $S_\alpha$. The second term is bounded by
	\[
	\frac{\bigl((M-\ell)q_+(\ell)\bigr)^{\rho+1}}{(\rho+1)!},
	\]
	again as in the fixed-size case, by the factorial-moment estimate for
	false positives.
\end{proof}

\subsection{Alternative scoring: normalised overlap}
\label{subsec:alternative-score}

The penalised score is natural when the admissible concept sizes lie in a
bounded or moderately growing window. We conclude by briefly discussing a
normalised alternative whose deterministic separation condition is less
sensitive to the width of the size window.
More precisely, we consider the \emph{normalised overlap}:
\[
\mathrm{score}^{\mathrm{norm}}_\beta(x)
=
\frac{|S_\beta\cap \supp(x)|}{L_\beta}.
\]
Loosely speaking, in comparison with the penalised score, the corresponding
deterministic threshold for the normalised score depends only on the lower
cutoff $\ell$, making it more robust under substantial variability of concept
sizes.

\begin{lemma}
\label{lem:normalised-separation-randomL}
Fix $\alpha$ and suppose that $\mathcal G_\alpha(\delta,\rho)$ holds.
Then
\[
\mathrm{score}^{\mathrm{norm}}_\alpha(x)\ge 1-\delta,
\]
and for every $\beta\neq\alpha$,
\[
\mathrm{score}^{\mathrm{norm}}_\beta(x)
\le
\frac{|S_\alpha\cap S_\beta|+\rho}{L_\beta}.
\]
In particular, if
\[
|S_\alpha\cap S_\beta|+\rho<(1-\delta)L_\beta,
\]
then
\[
\mathrm{score}^{\mathrm{norm}}_\alpha(x)>
\mathrm{score}^{\mathrm{norm}}_\beta(x).
\]
\end{lemma}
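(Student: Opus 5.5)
The argument mirrors the proof of Lemma~\ref{lem:penalised-separation-randomL}, with the affine penalty replaced by division by the concept size. We prove the two one-sided bounds separately and then compare them. Throughout we work on $\mathcal G_\alpha(\delta,\rho)$ and use only the deterministic definitions of $F_-^{(\alpha)}$ and $F_+^{(\alpha)}$.

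For the target, on $\mathcal G_\alpha(\delta,\rho)$ at most $\delta L_\alpha$ strokes of $S_\alpha$ are inactive. Hence
\[
|S_\alpha\cap\supp(x)|=\sum_{\mu\in S_\alpha}x_\mu=L_\alpha-F_-^{(\alpha)}\ge(1-\delta)L_\alpha .
\]
Dividing by $L_\alpha>0$ (concept sizes are positive integers) gives $\mathrm{score}^{\mathrm{norm}}_\alpha(x)\ge 1-\delta$.

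For a competitor $\beta\neq\alpha$, we split $S_\beta\cap\supp(x)$ into strokes lying in $S_\alpha$ and strokes lying outside $S_\alpha$, exactly as before. The first part has at most $|S_\alpha\cap S_\beta|$ elements. Every element of the second part is an active stroke outside $S_\alpha$, so the second part has at most $F_+^{(\alpha)}\le\rho$ elements. Therefore $|S_\beta\cap\supp(x)|\le |S_\alpha\cap S_\beta|+\rho$, and dividing by $L_\beta$ yields the stated upper bound.

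For the final claim, suppose $|S_\alpha\cap S_\beta|+\rho<(1-\delta)L_\beta$. Dividing by $L_\beta>0$ gives
\[
\mathrm{score}^{\mathrm{norm}}_\beta(x)\le\frac{|S_\alpha\cap S_\beta|+\rho}{L_\beta}<1-\delta\le \mathrm{score}^{\mathrm{norm}}_\alpha(x).
\]
There is no real obstacle here. The only points to watch are that the division uses $L_\beta\ge1$, and that the strict inequality enters only through the hypothesis on $\beta$: the target bound itself is non-strict.
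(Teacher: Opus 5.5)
Your proposal is correct and follows essentially the same argument as the paper. You bound the target's active strokes below by $(1-\delta)L_\alpha$, bound the competitor's active strokes above by $|S_\alpha\cap S_\beta|+\rho$, and divide by the concept sizes. Your explicit chaining of the strict inequality in the final claim, and the remark that the division needs $L_\beta\ge 1$, are small clarifications that the paper leaves implicit.
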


\begin{proof}
On $\mathcal G_\alpha(\delta,\rho)$ at most $\delta L_\alpha$ true strokes
are missing, so $
|S_\alpha\cap \supp(x)|\ge (1-\delta)L_\alpha.$
Hence
\[
\mathrm{score}^{\mathrm{norm}}_\alpha(x)
=
\frac{|S_\alpha\cap \supp(x)|}{L_\alpha}
\ge 1-\delta.
\]
Moreover, every active stroke counted in $S_\beta\cap \supp(x)$ either
belongs to $S_\alpha\cap S_\beta$ or is a false positive outside
$S_\alpha$, so
\[
|S_\beta\cap \supp(x)|\le |S_\alpha\cap S_\beta|+\rho.
\]
Dividing by $L_\beta$ gives the stated bound.
\end{proof}

\begin{remark}
\label{rem:pen-vs-norm}
For the penalised score, the deterministic threshold on a size window
$[\ell,u]$ is
\[
t_{\ell,u}
=
\Bigl\lfloor
(1-\delta)\ell-\rho-\frac{b}{a}(u-\ell)
\Bigr\rfloor.
\]
Hence a wide size window makes separation more difficult. In particular,
if $\ell$ is fixed and $u\to\infty$ while $b/a$ is kept fixed, then
$t_{\ell,u}\to-\infty$, so the deterministic separation condition becomes
vacuous.

By contrast, for the normalised score one has on
$\mathcal G_\alpha(\delta,\rho)$
\[
\mathrm{score}^{\mathrm{norm}}_\alpha(x)\ge 1-\delta,
\qquad
\mathrm{score}^{\mathrm{norm}}_\beta(x)
\le
\frac{|S_\alpha\cap S_\beta|+\rho}{L_\beta}.
\]
Thus, on the window $[\ell,u]$, correct separation is guaranteed whenever
\[
|S_\alpha\cap S_\beta|
<
(1-\delta)\ell-\rho.
\]
This threshold depends only on the lower cutoff $\ell$, not on the width
$u-\ell$ of the window. Therefore the normalised score is more robust
than the penalised score under substantial variability of concept sizes.

The price one pays is that very small competing concepts may be
problematic, since normalisation by $L_\beta$ makes them comparatively
easy to match. For this reason, a lower size cutoff remains essential for
the normalised score as well.
\end{remark}

\subsection*{Acknowledgements} 
MH thanks the Aarhus Institute for Advanced Studies for their kind hospitality during a visit in July 2025.

ML's research was funded by the Deutsche Forschungsgemeinschaft (DFG, German Research Foundation) under Germany's Excellence Strategy EXC 2044/2 - 390685587, Mathematics M\"unster: \emph{Dynamics-Geometry-Structure}.

\bibliographystyle{abbrv}
\bibliography{LiteraturDatenbank}  
	
\end{document}